\newtheorem{thm}{Theorem}[section]
\newtheorem{lemma}[thm]{Lemma}
\newtheorem{question}[thm]{Question}
\newtheorem{fact}[thm]{Fact}
\newtheorem*{conj*}{Conjecture}
\theoremstyle{definition}
\newtheorem{df}[thm]{Definition}
\theoremstyle{remark}
\renewcommand{\r}{\mathbb{R}}
\newcommand{\Z}{\mathbb{Z}}
\newcommand{\n}{\mathbb{N}}
\renewcommand{\to}{\rightarrow}
\def \<{\langle}
\def \>{\rangle}
\def \z {{\mathbb Z}}
\def \*Z {{{^*}\Z}}
\def \((  {(\!(}
\def \)) {)\!)}
\def \mo   {\text{mod}}
\def \st {\operatorname {st}}
\numberwithin{equation}{section}
\def\indsym#1#2{%
  \setbox0=\hbox{$\m@th#1x$}%
  \kern\wd0%
  \hbox to 0pt{\hss$\m@th#1\mid$\hbox to 0pt{$\m@th#1^{#2}$}\hss}%
  \lower.9\ht0\hbox to 0pt{\hss$\m@th#1\smile$\hss}%
  \kern\wd0}
\def\nindsym#1#2{%
  \setbox0=\hbox{$\m@th#1x$}%
  \kern\wd0%
  \hbox to 0pt{\hss$\m@th#1\not$\kern1.4\wd0\hss}
  \hbox to 0pt{\hss$\m@th#1\mid$\hbox to 0pt{$\m@th#1^{\,#2}$}\hss}%
  \lower.9\ht0\hbox to 0pt{\hss$\m@th#1\smile$\hss}%
  \kern\wd0}
\def\BD{\operatorname{BD}}
\begin{document}
\title{On a sumset conjecture of Erd\H{o}s}
\author[Di Nasso et. al.]{Mauro Di Nasso, Isaac Goldbring, Renling Jin,
Steven Leth, Martino Lupini, Karl Mahlburg}
\thanks{The authors were supported in part by the American Institute of Mathematics through its SQuaREs program.  I. Goldbring was partially supported by NSF grant DMS-1262210.  M. Lupini was supported by the York University Elia Scholars Program.  K. Mahlburg was supported by NSF Grant DMS-1201435.}
\address{Dipartimento di Matematica, Universita' di Pisa, Largo Bruno
Pontecorvo 5, Pisa 56127, Italy}
\email{dinasso@dm.unipi.it}
\address{Department of Mathematics, Statistics, and Computer Science,
University of Illinois at Chicago, Science and Engineering Offices M/C 249,
851 S. Morgan St., Chicago, IL, 60607-7045}
\email{isaac@math.uic.edu}
\address{Department of Mathematics, College of Charleston, Charleston, SC,
29424}
\email{JinR@cofc.edu}
\address{School of Mathematical Sciences, University of Northern Colorado,
Campus Box 122, 510 20th Street, Greeley, CO 80639}
\email{Steven.Leth@unco.edu}
\address{Department of Mathematics and Statistics, York University, N520
Ross, 4700 Keele Street, M3J 1P3, Toronto, ON, Canada}
\email{mlupini@mathstat.yorku.ca}
\address{Department of Mathematics, Louisiana State University, 228 Lockett
Hall, Baton Rouge, LA 70803}
\email{mahlburg@math.lsu.edu}

\begin{abstract}
Erd\H{o}s conjectured that for any set $A\subseteq \mathbb{N}$ with positive
lower asymptotic density, there are infinite sets $B,C\subseteq \mathbb{N}$
such that $B+C\subseteq A$. We verify Erd\H{o}s' conjecture in the case that $A$ has \emph{Banach} density exceeding $\frac{1}{2}$.  As a consequence, we prove that, for $A\subseteq \mathbb{N}$ with
positive Banach density (a much weaker assumption than positive lower density), we can find infinite $B,C\subseteq \mathbb{N}$ such
that $B+C$ is contained in the union of $A$ and a translate of $A$.  Both of the aforementioned
results are generalized to arbitrary countable
amenable groups. We also provide a positive solution to Erd\H{o}s'
conjecture for subsets of the natural numbers that are \emph{pseudorandom}.
\end{abstract}

\keywords{Sumsets of integers, asymptotic density, amenable groups, nonstandard analysis}

\subjclass[2010]{11B05, 11B13, 11P70, 28D15, 37A45}
\maketitle

\section{Introduction}

For $A\subseteq \mathbb{N}$, the \emph{lower (asymptotic) density
of $A$} is defined to be%
\begin{equation*}
\underline{d}(A):=\liminf_{n\rightarrow \infty }\frac{|A\cap \lbrack 1,n]|}{n%
}\text{.}
\end{equation*}%
Here, and throughout this paper, for $a,b\in \mathbb{N}$, $[a,b]$ denotes 
\begin{equation*}
\{c\in \mathbb{N}:\ a\leq c\leq b\}.
\end{equation*}%
Moreover if $A$ and $B$ are subsets of $\mathbb{N}$, then $A+B$ denotes the
sumset $\{a+b:\ a\in A\text{ and }b\in B\}$. In \cite{Erdos-problems} and 
\cite{Erdos-problemsIII} Erd\H{o}s conjectured the following generalization
of Hindman's theorem on sumsets (see \cite{Hindman}): If $A$ is a set of
natural numbers of positive lower density, then there is an infinite subset $%
A^{\prime }$ of $A$ such that $A^{\prime }+A^{\prime }$ is contained in a
translate of $A$. This density version of Hindman's theorem was inspired by
the celebrated Szemer\'{e}di theorem on arithmetic progressions (see \cite%
{Szemeredi}), which can be regarded as a density version of van der
Waerden's theorem from \cite{Waerden}. Later, Straus provided a
counterexample to this conjecture of Erd\H{o}s, as reported in \cite%
{Erdos-survey} on page 105. The conjecture was thus modified (cf.\ \cite%
{Nathanson} and page 85 of \cite{Erdos-Graham}) as follows.

\begin{conj*}[Erd\H{o}s]
If $A\subseteq \mathbb{N}$ has $\underline{d}(A)>0$, then there are two
infinite sets $B,C\subset \mathbb{N}$ such that $B+C\subset A$.
\end{conj*}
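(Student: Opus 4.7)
The plan is to combine an ergodic-theoretic correspondence with an inductive construction of $B$ and $C$, one element at a time. As a first step, I would invoke the Furstenberg correspondence principle, realizable in the nonstandard framework favored in this paper via Loeb measure on a suitably chosen hyperfinite interval, to pass from $A$ to a probability measure-preserving system $(X,\mathcal{B},\mu,T)$ together with a set $E\in\mathcal{B}$ of positive measure such that
\[
\mu\Bigl(\bigcap_{n\in F}T^{-n}E\Bigr)>0 \;\Longrightarrow\; \bigcap_{n\in F}(A-n) \text{ is infinite}
\]
for every finite $F\subseteq\mathbb{N}$. The conjecture then reduces to producing infinite $B,C\subseteq\mathbb{N}$ for which $\bigcap_{b\in B_0,\,c\in C_0}T^{-(b+c)}E$ has positive $\mu$-measure for every finite $B_0\subseteq B$ and $C_0\subseteq C$; pulling back any point of the resulting nonempty intersection through the correspondence would yield $B+C\subseteq A$.

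Second, I would attempt to build $B=\{b_1<b_2<\cdots\}$ and $C=\{c_1<c_2<\cdots\}$ by an alternating induction. At stage $n$, having selected $b_1,\dots,b_n$ and $c_1,\dots,c_n$ so that $E_n:=\bigcap_{i\leq n,\,j\leq n}T^{-(b_i+c_j)}E$ has positive measure, the task is to find $b_{n+1}>b_n$ and $c_{n+1}>c_n$ for which the enlarged intersection $E_{n+1}$ is still of positive measure. A natural mechanism for the selection step is Khintchine-type recurrence: for any set $F$ of positive measure, the set $\{m:\mu(F\cap T^{-m}F)>0\}$ is syndetic, and one would draw the new $b_{n+1}$ from such a syndetic set relative to $\bigcap_{j\leq n}T^{-c_j}E_n$, then draw $c_{n+1}$ from the analogous syndetic set for the updated configuration.

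The principal obstacle is that $\mu(E_n)$ can degrade as $n$ grows, and simultaneously enforcing that every cross-intersection $T^{-(b_i+c_j)}E$ (not merely the diagonal ones) meets positively is a far stronger constraint than single or pairwise recurrence. Overcoming this appears to demand a structural dichotomy along Furstenberg--Zimmer lines: on the weakly mixing factor, generic translates decouple and asymptotically independent $B,C$ are easy to extract; on the compact (Kronecker) factor, $B$ and $C$ can be built from Bohr sets realized as common return times. Splicing the two regimes together through the tower of compact-by-weakly-mixing extensions, and verifying that $B$ and $C$ extracted at the factor level genuinely witness $B+C\subseteq A$ at the level of the original set, is where I expect the hard work to live. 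Indeed, the full conjecture looks considerably deeper than the $\BD(A)>\tfrac{1}{2}$ case established later in this paper, and may require an essentially new ingredient beyond the density/recurrence heuristics sketched above.
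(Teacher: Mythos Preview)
This statement is presented in the paper as an open \emph{conjecture}, not a theorem; the paper offers no proof of it. The authors make only partial progress: Theorem~\ref{stronger} settles the case $\BD(A)>\frac{1}{2}$, Theorem~\ref{generalresult} gives a one-shift version under the weaker hypothesis $\BD(A)>0$, and Theorem~\ref{pseudoBC} treats pseudorandom $A$. The paper's closing Question~\ref{decomp} asks precisely whether a Furstenberg-tower decomposition into weakly mixing and compact parts---the route you sketch in your final paragraph---could be pushed through, and leaves this open. So there is no ``paper's own proof'' to compare against, and your honest concluding sentence is the accurate assessment.

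Beyond that, there is a concrete gap earlier in your reduction. The correspondence principle tells you that $\mu\bigl(\bigcap_{n\in F}T^{-n}E\bigr)>0$ forces $\bigcap_{n\in F}(A-n)$ to be infinite, but ``pulling back any point of the resulting nonempty intersection'' only produces some $x$ with $x+F\subseteq A$. Taking $F=B_0+C_0$ yields $(x+B_0)+C_0\subseteq A$ for a shift $x=x(B_0,C_0)$ that varies with the finite subsets chosen; nothing in your outline pins down a single $x$ working for all finite $B_0\subseteq B$, $C_0\subseteq C$ simultaneously, so you cannot conclude $B+C\subseteq A$ for fixed infinite $B,C$. (In the Loeb-measure realization the ``point'' you pull back is a nonstandard $\xi$, and $\xi+B$ is not a standard subset of $\mathbb{N}$.) The paper's partial results sidestep this issue entirely: rather than passing through an abstract point-pullback, they build $B$ and $C$ one element at a time via Lemma~\ref{easier} (or Lemma~\ref{embed}) together with Fact~\ref{bergelson}, so that membership $b_i+c_j\in A$ is secured directly at each stage of the construction.
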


We will refer to this as \textquotedblleft Erd\H{o}s' $B+C$
conjecture\textquotedblright . Partial results on this conjecture have been
obtained by Nathanson in \cite{Nathanson}, where he proved in particular
that one can find an infinite set $B$ and an arbitrarily large finite set $F$
such that $B+F\subset A$. 
%In \cite{}, Erd\H{o}s conjectured the following:  if $\underline{d}(A)>0$, then there are infinite sets $B,C\subseteq \n$ such that $B+C\subseteq A$; we refer to this %conjecture as the ``$B+C$ conjecture.''

%INCLUDE MORE HISTORY...

In this paper, we make progress on the $B+C$ conjecture by proving the
following ``one-shift'' version for sets of positive \emph{Banach density}, where, for $A\subseteq \mathbb{N}$, the (upper) Banach density of $A$ is defined to be
\begin{equation*}
\BD(A):=\lim_{n\rightarrow \infty }\sup_{m\in \mathbb{N}}\frac{|A\cap \lbrack m,m+n]|%
}{n}.
\end{equation*}%

\begin{thm}
\label{generalresult} If $\BD(A)>0$, then there are infinite $%
B,C\subseteq \mathbb{N}$ and $k\in \mathbb{N}$ such that $B+C\subseteq A\cup
(A+k)$.
\end{thm}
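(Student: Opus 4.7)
The plan is to reduce to the Banach density $>\tfrac{1}{2}$ case, which is established separately in the paper; we freely invoke the statement that if $\BD(A') > \tfrac{1}{2}$, then there exist infinite $B', C' \subseteq \mathbb{N}$ with $B' + C' \subseteq A'$. Writing $\alpha := \BD(A) > 0$, I would proceed in three steps.

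First, amplify the density by taking a union of translates: choose integers $k_1, \ldots, k_r \in \mathbb{N}$ so that $\tilde A := \bigcup_{i=1}^{r}(A + k_i)$ satisfies $\BD(\tilde A) > \tfrac{1}{2}$. On an interval $I$ with $|A \cap I|/|I|$ close to $\alpha$, greedily selecting each $k_i$ to maximize the new coverage of $I$ leaves at most a $(1 - \alpha)^r$ fraction uncovered, which falls below $\tfrac{1}{2}$ for $r > \log 2 / \log\tfrac{1}{1-\alpha}$. Applying the $>\tfrac{1}{2}$ theorem to $\tilde A$ then yields infinite $B_0, C_0 \subseteq \mathbb{N}$ with $B_0 + C_0 \subseteq \bigcup_{i=1}^{r}(A + k_i)$.

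Second, reduce from $r$ translates to $2$. For each $(b, c) \in B_0 \times C_0$, assign a color $\chi(b, c) \in \{1, \ldots, r\}$ by choosing an index $i$ with $b + c \in A + k_i$; note that $\chi$ factors through the sum $b + c$. Extract infinite $B \subseteq B_0$, $C \subseteq C_0$, and two indices $i, j$ with $\chi(B \times C) \subseteq \{i, j\}$, so that $B + C \subseteq (A + k_i) \cup (A + k_j)$. Translating $B$ by $-k_i$ (and discarding finitely many elements to stay in $\mathbb{N}$) yields $B + C \subseteq A \cup (A + k)$ with $k := k_j - k_i$, completing the proof.

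The main obstacle is the two-translate extraction above. Classical bipartite Ramsey does not generally permit reducing an $r$-coloring to a $2$-coloring on an infinite combinatorial sub-rectangle -- even reducing $r = 2$ to $r = 1$ can fail, as shown by the coloring that sends $(b, c)$ to $1$ if $b < c$ and to $2$ otherwise. A fully successful argument must exploit that $\chi$ factors through $b + c$. I expect the paper to handle this either by recasting the construction inside a hyperfinite interval with Loeb measure (building $B$, $C$, and $k$ internally via saturation or compactness, so that the multi-translate intermediate step is bypassed), or by a Hindman-style combinatorial argument tailored to this additive coloring.
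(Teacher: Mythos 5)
Your overall strategy --- fatten $A$ by finitely many translates to Banach density exceeding $\tfrac{1}{2}$, invoke the high-density theorem, and then reduce to at most two translates --- matches the paper's, and you correctly identify the two-translate reduction as the delicate step. But you leave that step as an acknowledged gap (``I expect the paper to handle this by\ldots''), and the gap is real: as written your proposal is not a proof, and neither of your two guesses about how the paper fills it is quite right.

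The paper does not attempt bipartite Ramsey on $B_0\times C_0$. Instead, after obtaining increasing enumerations $B_0=(b_i)$ and $C_0=(c_j)$ (by applying Theorem~\ref{highdensity} to the block set $A_{[n]}$ of Fact~\ref{fatintegers}), it colors each \emph{pair of indices} $\{i,j\}$ with $i<j$ by an ordered pair of data recording which translate of $A$ catches $b_i+c_j$ \emph{and} which catches $b_j+c_i$. In the paper's block formulation the color is $(\nu,\xi)\in[0,n-1]^2$, where $nb_i+nc_j+\nu$ is the first element of $A$ in the block starting at $n(b_i+c_j)$, and $\xi$ plays the analogous role for $nc_i+nb_j$. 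Classical Ramsey's theorem for $2$-element subsets of $\mathbb{N}$ then yields an infinite monochromatic $J$ with a fixed color $(\nu,\xi)$. Taking $B$ to consist of the $b_i$ with $i$ ranging over the odd-position elements of $J$, and $C$ of the $c_j$ with $j$ over the even-position elements, guarantees $i\neq j$ for every $(b,c)\in B\times C$; so each sum lands in the $\nu$-translate when $i<j$ and in the $\xi$-translate when $i>j$, giving $B+C\subseteq A\cup(A+k)$ with $k=\nu-\xi$ after the harmless shift $B\mapsto B+\nu$. The odd/even split is essential because Ramsey controls nothing on the diagonal $i=j$. The observation you lean on --- that $\chi$ factors through $b+c$ --- is true but is not what makes the argument go through; the decisive move is to color \emph{index pairs}, so that once $B$ and $C$ are drawn from a common monochromatic index set the only residual freedom is the sign of $j-i$. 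This is neither a Loeb-measure construction nor a Hindman-style argument; it is ordinary two-dimensional Ramsey applied to the right coloring.

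A secondary issue: your greedy justification for finding $k_1,\dots,k_r$ with $\BD\bigl(\bigcup_i(A+k_i)\bigr)>\tfrac12$ is not rigorous as stated. After placing one translate, the uncovered portion $S$ of a witnessing interval $I$ is no longer an interval, and the Banach density hypothesis (an upper density, taken over intervals) gives no lower bound on $|(A+k)\cap S|/|S|$ for any $k$. The paper cites Hindman's Theorem 3.8 (Fact~\ref{fatintegers}) for the integers, and proves Lemma~\ref{fat} via the tiling result Fact~\ref{Lemma: Pogo-Schwa} for general amenable groups; the fattening step is a genuine lemma, not a consequence of a naive covering estimate.
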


Observe that%
\begin{equation*}
\underline{d}\left( A\right) \leq \BD\left( A\right),
\end{equation*}%
whence the hypothesis of positive Banach density is weaker than the hypothesis of positive lower
density.

We also settle Erd\H{o}s' conjecture for sets of large Banach density.

\begin{thm}
\label{stronger} If $\BD(A)>\frac{1}{2}$, then there are infinite $%
B,C\subseteq \mathbb{N}$ such that $B+C\subseteq A$.
\end{thm}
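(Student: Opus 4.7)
The plan is to prove Theorem \ref{stronger} directly using nonstandard analysis, since the abstract indicates that Theorem \ref{generalresult} will be derived as a consequence of Theorem \ref{stronger} rather than used as input. By the transfer principle and the hypothesis $\BD(A) > 1/2$, there is a hyperfinite interval $I \subseteq {}^*\mathbb{N}$ on which ${}^*A$ has Loeb-relative density strictly greater than $1/2$.

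Within this nonstandard framework, I would build infinite $B = \{b_1 < b_2 < \cdots\}$ and $C = \{c_1 < c_2 < \cdots\}$ in $\mathbb{N}$ satisfying $b_i + c_j \in A$ for all $i, j$ by an induction that at each stage maintains, alongside the chosen elements, an auxiliary hyperfinite set $D_n \subseteq I$ of positive Loeb measure that encodes valid future choices of $c$---namely, $D_n \subseteq \bigcap_{i \leq n} ({}^*A - b_i)$. Any standard element of $D_n$ can then serve as the next $c_{n+1}$. After picking $c_{n+1}$ standard in $D_n$ above $c_n$, I would choose $b_{n+1} > b_n$ in $\mathbb{N}$ so that $D_{n+1} := D_n \cap ({}^*A - b_{n+1})$ still has positive Loeb measure, and iterate.

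The principal obstacle is the density collapse at the step of choosing $b_{n+1}$: the naive subadditive bound
\[
\mu_L(D_{n+1}) \geq \mu_L(D_n) - (1 - \BD(A))\mu_L(I),
\]
which holds for an arbitrary choice of $b_{n+1}$, forces $\mu_L(D_n)$ to vanish in only $O(1/(1-\BD(A)))$ steps---a finite and small number when $\BD(A)$ is only slightly above $1/2$. Resolving this is where the hypothesis $\BD(A) > 1/2$ must enter essentially: the plan is to use a Fubini-style averaging argument on a hyperfinite rectangle $I \times J$ to show that $b_{n+1}$ can always be selected so that $D_{n+1}$ retains a proportional share of $\mu_L(D_n)$ rather than suffering a fixed additive loss, and then to invoke saturation (or an overspill argument) to convert the inductive nonstandard construction into genuine infinite standard sequences $B$ and $C$ with $B+C \subseteq A$. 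The density-preserving choice of $b_{n+1}$---rather than either the setup in $I$ or the final saturation step---is the technical heart of the argument, and it is precisely here that the strict inequality $\BD(A) > 1/2$ (rather than merely positive Banach density) is forced upon the proof.
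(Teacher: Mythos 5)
Your high-level framing is aligned with the paper's: both work with a hyperfinite interval on which ${}^*A$ has Loeb density $\approx \BD(A)$, and both aim to sustain an auxiliary ``set of future options'' of positive Loeb measure through an inductive construction. You also correctly identify the crux: a naive additive loss of $1-\BD(A)$ per step kills the construction after $O(1/(1-\BD(A)))$ steps. However, your proposed resolution --- a Fubini-style averaging argument giving a ``proportional share'' of $\mu_L(D_n)$ --- is left as a promissory note, and as stated it does not actually resolve the obstacle. An averaging argument over a long interval $J$ of shifts $b$ gives $\frac{1}{|J|}\sum_{b\in J}\mu_L(D_n\cap({}^*A-b))\approx \BD(A)\cdot\mu_L(D_n)$, so the witnessing shift $b$ with a proportional share is a priori an element of $J$, hence typically nonstandard; you have given no mechanism to replace it by a standard $b_{n+1}$. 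More importantly, your scheme only records the constraint that future $c$'s lie in $\bigcap_{i\le n}({}^*A-b_i)$; it silently drops the symmetric requirement that the next $b_{n+1}$ satisfy $b_{n+1}+c_j\in A$ for all $j\le n+1$, which is not controlled by $D_n$ at all. This is precisely why a greedy one-directional construction collapses.

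The paper resolves both issues by separating the construction into a preprocessing phase and a picking phase, using two tools your proposal does not contain. First, Lemma \ref{easier} produces a \emph{standard} set $L\subseteq\mathbb N$ that simultaneously has Loeb density $\ge\alpha$ along a hyperfinite interval $I_\nu$ and is ``intersective'' in the sense that $A\cap\bigcap_{x\in F}(A-x)$ is infinite for every finite $F\subseteq L$; from this one recursively extracts a standard sequence $D=(d_n)\subseteq A$ with $l_i+d_n\in A$ for $i\le n$. Second, since $\BD(A)>\tfrac12$ one gets the \emph{uniform} lower bound $\mu_L({}^*L\cap({}^*A-d_n)\cap I_\nu)\ge 2\alpha-1>0$ for every $n$ --- note this bound does not decay with $n$, unlike what your proportional-share scheme would deliver --- and then Bergelson's intersectivity lemma (Fact \ref{bergelson}) converts this uniform lower bound into an infinite subsequence of $(d_n)$ along which \emph{all finite intersections} $\mu_L({}^*L\cap\bigcap_{i\le n}({}^*A-d_i)\cap I_\nu)$ remain positive. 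This is exactly the step that escapes the density collapse, and it is a genuine combinatorial/measure-theoretic input, not something a direct averaging argument supplies. Once one has both the pool $L$ and the Bergelson-refined pool $D$, the alternating construction of $(b_n)$ from $L$ and $(c_n)$ from $D$ is routine and automatically handles the two-sided constraints. Your proposal as written has a genuine gap at the density-preservation step, and it is missing both the structural lemma and Bergelson's lemma that carry the actual proof.
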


We derive Theorem \ref{generalresult} from Theorem \ref{stronger} by showing
that every subset of the natural numbers of positive Banach density has
finitely many translates whose union has Banach density at least $\frac{1}{2}
$ and then use Ramsey's theorem to obtain our shifts.

In the proof of Theorem \ref{generalresult}, we will see that whether $b_i+c_j$ is in $A$ or $A+k$ depends only on whether or not $i<j $ holds, where $B=(b_i)$ and $C=(c_j)$ are increasing enumerations of $B$ and $C$ respectively.

We generalize both of the aforementioned results to the case of arbitrary countable amenable groups.  However, we present proofs for the two contexts separately as the proofs for subsets of the natural numbers are easier and/or require less technical machinery.

In the final section, we prove the $B+C$ conjecture for sets $A$ that are 
\emph{pseudorandom} in a precise technical sense. Here we remain in the
setting of sets of natural numbers as we do not know how to generalize one
of the key ingredients (Fact \ref{mauro}) to the setting of amenable groups.

%Once again, our proof establishes more than what is claimed as we can take $C$ to be a subset of $A$.  (In fact, we can even take $C$ to be a subset of $A_1$ for any subset $A_1$ of $A$ that is ``rich'' in $A$ as defined in the next section.)

%\begin{thm}
%Suppose that $A\subseteq \n$ is such that $\underline{d}(A)>0$.  Then there are $A'\subseteq A$, $B=(b_i \ : \ i\in \n)$ and $C=(c_i \ : \ i\in \n)$ such that:
%\begin{enumerate}
%%\item $\overline{d}(A')<\epsilon$;
%\item $\BD(A')=0$;
%\item $B\subseteq A'$;
%\item $b_i+c_j\in A$ if $i< j$;
%\item $b_i+c_j\in A'+c_1$ if $i\geq j$.
%\end{enumerate}
%In particular $B+C$ is contained in the union of $A$ and a translate of $A$. Moreover, if $\overline{d}(A)+\BD(A)>1$, then we may take $C\subseteq A_1$ for any $A_1\subseteq A$ which is ``syndetic in $A$'' (to be defined below).
%\end{thm}

We use \emph{nonstandard analysis} to derive our results and we assume that
the reader is familiar with elementary nonstandard analysis. For those not
familiar with the subject, the survey article \cite{Jin} contains a light
introduction to nonstandard methods with combinatorial number theoretic aims
in mind. The specific technical results from nonstandard analysis that we
will need are found in Section \ref{prelim}, where we review the Loeb
measure. In Section \ref{prelim}, we also recall the basic facts from the
theory of amenable groups that we need. In Sections \ref{high} and \ref%
{general}, we prove Theorems \ref{stronger} and \ref{generalresult}
respectively (as well as their amenable counterparts). In Section \ref{pseudo}, we prove Erd\H{o}s' conjecture for
pseudorandom sets. 

Throughout the paper, we do not include $0$ in the set $%
%TCIMACRO{\U{2115} }%
%BeginExpansion
\mathbb{N}
%EndExpansion
$ of natural numbers.  Also, if $B$%
, $C$ are subsets of a group $G$, then $BC$ denote the set of products%
\begin{equation*}
\left\{ bc:\ b\in B\text{ and }c\in C\right\} \text{.}
\end{equation*}

\subsection{Acknowledgements}

This work was partly completed during a week long meeting at the American
Institute for Mathematics on June 3-7, 2013 as part of the SQuaRE
(Structured Quartet Research Ensemble) project ``Nonstandard Methods in
Number Theory.'' The authors would like to thank the Institute for the
opportunity and for the Institute's hospitality during their stay.

\section{Preliminaries}

\label{prelim}

\subsection{Loeb measure}

Throughout this paper, we always work in a countably saturated nonstandard
universe.

We recall the definition of Loeb measure, which is defined relative to a
fixed hyperfinite set $X$. For every internal $A\subseteq X$, the measure of 
$A$ is defined to be $\mu (A):=\st(\frac{|A|}{|X|})$. This defines a
finitely additive measure $\mu $ on the algebra of internal subsets of $X$,
which canonically extends to a countably additive probability measure $\mu
_{L}$ on the $\sigma $-algebra of \emph{Loeb measurable} sets of $X$.

\subsection{Amenable Groups}

Suppose that $G$ is a group. A \emph{(left) F\o lner sequence} for $G$ is a
sequence $(F_{n})_{n\in \mathbb{N}}$ of of finite subsets of $G$ such that,
for every $g\in G$, we have 
\begin{equation*}
\lim_{n\rightarrow \infty }\frac{|gF_{n}\triangle F_{n}|}{|F_{n}|}=0.
\end{equation*}%
Observe that if $(F_{n})$ is a F\o lner sequence for $G$ and $(x_{n})$ is
any sequence in $G$, then $(F_{n}x_{n})$ is also a F\o lner sequence for $G$%
. Observe also that, if $\nu \in {}^{\ast }\mathbb{N}\setminus \mathbb{N}$,
then $\frac{|gF_{\nu }\triangle F_{\nu }|}{|F_{\nu }|}\approx 0$ for every $%
g\in G$. (In the terminology of \cite{mauromartino}, $F_{\nu }$ is a \emph{F%
\o lner approximation for $G$}.)

A countable group $G$ is said to be \emph{amenable} if there is a F\o lner
sequence for $G$. For example, if $G=\mathbb{Z}$, then $G$ is amenable,
where one can take as $(F_{n})$ any sequence of intervals whose length
approaches infinity. The class of amenable groups is very rich, including
all solvable-by-finite groups, and is closed under subgroups, quotients, and
extensions.

In an amenable group, one can define a notion of (upper) Banach density. In
the rest of this subsection, fix a countable amenable group $G$. For $%
A\subseteq G$, the \emph{Banach density of $A$}, denoted $\BD(A)$, is
defined to be 
\begin{equation*}
\BD(A):=\sup \{\limsup_{n\rightarrow \infty }\frac{|A\cap F_{n}|}{%
|F_{n}|}\ :\ (F_{n})\text{ a F\o lner sequence for }G\}.
\end{equation*}

It can be shown that this supremum is actually attained in the sense that, for any $A\subseteq G$, there is a F\o lner sequence $(F_n)$ for $G$ such that $\lim_{n\to \infty}\frac{|A\cap F_n|}{|F_n|}=\BD(A)$. 

It is evident from
the definition that $\BD(A)=\BD\left( gA\right) =\BD(Ag)$ for all 
$g\in G$ and $A\subseteq G$. However, it is not a priori immediate that this
agrees with the usual notion of Banach density in the case that $G=\mathbb{Z}
$ as here one allows arbitrary F\o lner sequences rather than just sequences
of intervals. Nevertheless, it is shown in \cite[Remark 1.1]{BBF} that if $G$
is a countable amenable group and $(F_{n})$ is \emph{any} F\o lner sequence
for $G$, then there is a sequence $(g_{n})$ from $G$ such that $\BD%
(A)=\limsup_{n\rightarrow \infty }\frac{|A\cap F_{n}g_{n}|}{|F_{n}|}$,
whence we see immediately that the two notions of Banach density agree in
the case of the integers.

For finite $H\subseteq G$ and $\epsilon>0$, we say that a finite set $%
F\subseteq G$ is \emph{$(H,\epsilon)$-invariant} if, for every $h\in H$, we
have 
\begin{equation*}
\frac{|hF\triangle F|}{|F|}<\epsilon.
\end{equation*}
One can equivalently define a countable group to be amenable if, for every
finite $H\subseteq G$ and $\epsilon>0$, there is a finite subset of $G$ that
is $(H,\epsilon)$-invariant. (This definition has the advantage that it
extends to groups of arbitrary cardinality.) In this language, we have that $%
\BD(A)$ is the supremum of those $\gamma$ for which, given any finite $%
H\subseteq G$ and any $\epsilon>0$, there is a finite $F\subseteq G$ that is 
$(H,\epsilon)$-invariant and satisfying $\frac{|A\cap F|}{|F|}\geq \gamma$.

%We will also find a nonstandard perspective on amenability and Banach density in amenable groups to be useful.  Following \cite{mauromartino}, we say that a nonempty hyperfinite subset $E$ of ${}^{\ast}G$ is a \emph{Folner approximation of $G$} if, for every $g\in G$, we have $\frac{|gE\triangle E|}{|E|}\approx 0$.  In \cite[Proposition 1.3]{mauromartino}, it is shown that a countable group $G$ is amenable if and only if $G$ has a Folner approximation; moreover, if $G$ is amenable, then for any $A\subseteq G$, we have
%$$\BD(A)=\max \{\st\left(\frac{|{}^{\ast}A\cap E|}{|E|}\right) \ : \ E \text{ a Folner approximation of }G\}.$$

Finally, we will need a version of the pointwise ergodic theorem for
countable amenable groups due to E. Lindenstrauss \cite{Lind}. First, we say
that a F\o lner sequence $(F_{n})$ is \emph{tempered} if there is a constant 
$C>0$ such that, for every $n\in \mathbb{N}$, we have 
\begin{equation*}
|\bigcup_{k<n}F_{k}^{-1}F_{n}|\leq C|F_{n}|.
\end{equation*}%

For example, if $G=\mathbb Z$ and our $F_n$ are simply disjoint intervals with length and endpoints going to infinity, a tempered subsequence can always be obtained by insisting that the length of the $n^{\text{th}}$ interval in the subsequence is at least as large as the right endpoint of the $(n-1)^{\text{st}}$ interval.

Fortunately, there is an abundance of tempered F\o lner sequences for any
countable abelian group.

\begin{fact}[Lindenstrauss \protect\cite{Lind}]
Suppose that $G$ is a countable amenable group. Then every F\o lner sequence
for $G$ has a tempered subsequence. In particular, for $A\subseteq G$, there
is a tempered F\o lner sequence $(F_{n})$ for $G$ such that $\BD%
(A)=\lim_{n\rightarrow \infty }\frac{|A\cap F_{n}|}{|F_{n}|}$.
\end{fact}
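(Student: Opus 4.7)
The plan is to prove the main thinning statement (every F\o lner sequence has a tempered subsequence) by a greedy recursive construction, and then derive the ``in particular'' clause by applying this to a F\o lner sequence that realizes $\BD(A)$ as an honest limit, whose existence is the attainment result recalled earlier in this subsection. The workhorse estimate is that if $H \subseteq G$ is finite with $1 \in H$ and $F \subseteq G$ is finite and $(H,\epsilon)$-invariant, then
\[
|HF| \;=\; |F| + |HF \setminus F| \;\leq\; |F| + \sum_{h \in H \setminus \{1\}} |hF \triangle F| \;\leq\; (1 + |H|\epsilon)|F|,
\]
using the decomposition $HF \setminus F = \bigcup_{h \in H \setminus \{1\}}(hF \setminus F)$ together with $|hF \setminus F| \leq |hF \triangle F|$.

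Given a F\o lner sequence $(F_n)$, I would build the thinning $n_1 < n_2 < \cdots$ recursively. Put $n_1 := 1$; having chosen $n_1 < \cdots < n_k$, define the finite set
\[
H_k := \{1\} \cup \bigcup_{j \leq k} F_{n_j}^{-1}
\qquad \text{and set} \qquad
\epsilon_k := \frac{1}{2|H_k|}.
\]
Since $(F_n)$ is F\o lner and $H_k$ is a fixed finite subset of $G$, some $n_{k+1} > n_k$ has the property that $F_{n_{k+1}}$ is $(H_k, \epsilon_k)$-invariant. The workhorse estimate then yields
\[
\left| \bigcup_{j \leq k} F_{n_j}^{-1} F_{n_{k+1}} \right| \;\leq\; |H_k F_{n_{k+1}}| \;\leq\; (1 + |H_k|\epsilon_k) |F_{n_{k+1}}| \;=\; \tfrac{3}{2} |F_{n_{k+1}}|,
\]
so the subsequence $(F_{n_k})$ is tempered with constant $C = 3/2$, and is automatically F\o lner as a subsequence of a F\o lner sequence. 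For the ``in particular'' statement, apply this construction to a F\o lner sequence $(E_n)$ for which $|A \cap E_n|/|E_n| \to \BD(A)$ (such exists by the attainment fact stated earlier in this subsection); a subsequence preserves the limit, giving a tempered F\o lner sequence that realizes $\BD(A)$.

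The only genuinely delicate point is that $\epsilon_k$ must be chosen small enough relative to $|H_k|$ that the invariance of $F_{n_{k+1}}$ dominates the multiplicative inflation from taking products with the accumulated set $H_k$, yielding a bound with a uniform constant $C$ rather than one growing with $k$. Scaling $\epsilon_k$ by $1/|H_k|$ achieves exactly this, and the construction is legitimate because $H_k$ is fully determined by the indices already chosen at stage $k+1$.
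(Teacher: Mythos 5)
The paper states this result as an imported ``Fact'' cited from Lindenstrauss and supplies no proof of its own, so there is no internal argument to compare against. Your greedy thinning argument is correct and is essentially the standard proof (it is also how Lindenstrauss argues): the inclusion $\bigcup_{j\leq k}F_{n_j}^{-1}F_{n_{k+1}}\subseteq H_kF_{n_{k+1}}$ once $H_k$ absorbs the identity, the estimate $|HF|\leq(1+|H|\epsilon)|F|$ for $(H,\epsilon)$-invariant $F$, and the choice $\epsilon_k=\tfrac{1}{2|H_k|}$ to force a uniform constant $C=3/2$ are exactly the right ingredients, and a subsequence of a F\o lner sequence remains F\o lner. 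The ``in particular'' clause is correctly deduced from the attainment statement recalled earlier in the same subsection, since passing to a subsequence preserves the limit.
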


Here is the pointwise ergodic theorem for countable amenable groups:

\begin{fact}[Lindenstrauss \protect\cite{Lind}]
\label{ergodic} Suppose that $G$ is a countable amenable group acting on a
probability space $(X,\mathcal{B},\mu )$ by measure preserving
transformations and $(F_{n})$ is a tempered F\o lner sequence for $G$. If $%
f\in L^{1}(\mu )$ and%
\begin{equation*}
A\left( F_{n},f\right) (x):=\frac{1}{|F_{n}|}\sum_{g\in F_{n}}f(gx)
\end{equation*}%
for every $n\in 
%TCIMACRO{\U{2115} }%
%BeginExpansion
\mathbb{N}
%EndExpansion
$, then the sequence 
\begin{equation*}
\left( A\left( F_{n},f\right) \right) _{n\in 
%TCIMACRO{\U{2115} }%
%BeginExpansion
\mathbb{N}
%EndExpansion
}
\end{equation*}%
converges almost everywhere to a $G$-invariant $\bar{f}\in L^{1}(\mu )$.
Consequently, by the Lebesgue dominated convergence theorem, $A\left(
F_{n},f\right) $ converges to $\bar{f}$ in $L^{1}(\mu )$ and, in particular, 
\begin{equation*}
\int fd\mu =\int \bar{f}d\mu \text{.}
\end{equation*}
\end{fact}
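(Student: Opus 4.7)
The statement is Lindenstrauss's pointwise ergodic theorem for countable amenable groups along tempered F\o lner sequences, and any proof plan must reckon with the fact that this is a deep result whose main difficulty is obtaining a covering/maximal inequality in the absence of any geometric doubling property on $G$. The overall strategy I would follow is the classical Banach principle scheme: (i) prove convergence on a dense subset of $L^{1}(\mu )$, (ii) establish a weak $(1,1)$ inequality for the maximal function $M f(x):=\sup_{n}A(F_{n},|f|)(x)$, and (iii) combine (i) and (ii) by the standard approximation argument to upgrade a.e.\ convergence from the dense set to all of $L^{1}(\mu )$.

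For step (i), I would first establish the mean ergodic theorem: decompose $L^{2}(\mu )=\mathcal I\oplus \mathcal I^{\perp }$, where $\mathcal I$ is the closed subspace of $G$-invariant functions; the averages $A(F_{n},f)$ act as the identity on $\mathcal I$ and, by the F\o lner property and a direct computation, tend to $0$ in $L^{2}$-norm on the dense subspace of $\mathcal I^{\perp }$ spanned by coboundaries $f-f\circ T_{g}$. This identifies $\bar f$ with the conditional expectation $\mathbb E(f\mid \mathcal I)$ and gives $L^{2}$-convergence. For a.e.\ convergence, one then works with the same dense subset: on invariant functions it is trivial, and on coboundaries $f-f\circ T_{g}$ a telescoping estimate together with $|gF_{n}\triangle F_{n}|/|F_{n}|\to 0$ gives pointwise convergence to $0$ outside a null set (an $L^{\infty }$-truncation handles integrability issues).

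The main obstacle, and the whole reason the temperedness hypothesis is needed, is step (ii). In the integer case the Hardy--Littlewood maximal inequality follows from a Vitali covering lemma, but in a general countable amenable group no metric doubling is available. Lindenstrauss's insight, which I would reproduce, is a \emph{random covering lemma}: given finitely many translates $F_{n_{1}}x_{1},\dots ,F_{n_{N}}x_{N}$ one can select a random subcollection (sampling in decreasing order of the index $n_{i}$, each element independently with an appropriate probability) whose expected overlap is uniformly bounded and whose expected union covers a definite proportion of the original union. Temperedness is precisely the quantitative input $|\bigcup_{k<n}F_{k}^{-1}F_{n}|\leq C|F_{n}|$ that lets one control the expected multiplicity of the random subcollection. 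The resulting covering lemma transfers, via a standard linearization from the trivial action of $G$ on itself to the action on $(X,\mathcal B,\mu )$, into the weak $(1,1)$ inequality $\mu \{M f>\lambda \}\leq C'\|f\|_{1}/\lambda $.

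Finally, with the weak $(1,1)$ inequality in hand, step (iii) is routine: for $f\in L^{1}(\mu )$ and $\varepsilon >0$ write $f=f_{1}+f_{2}$ with $f_{1}$ in the dense class treated in (i) and $\|f_{2}\|_{1}<\varepsilon $, and bound $\mu \{\limsup_{n}|A(F_{n},f)-\bar f|>2\lambda \}$ using the maximal inequality applied to $f_{2}$ and a.e.\ convergence of $A(F_{n},f_{1})$; letting $\varepsilon \to 0$ gives a.e.\ convergence for $f$. The $L^{1}$-convergence of $A(F_{n},f)$ to $\bar f$ and the identity $\int f\,d\mu =\int \bar f\,d\mu $ then follow from dominated convergence applied to bounded $f$ and density in $L^{1}$. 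I expect the covering lemma of step (ii) to absorb essentially all of the technical work.
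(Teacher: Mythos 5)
The paper does not prove this statement; it is labelled a \emph{Fact} and cited directly to Lindenstrauss \cite{Lind}, so there is no in-paper proof to compare against. That said, your sketch is an accurate high-level account of Lindenstrauss's argument: the Banach principle scheme (dense subclass, weak $(1,1)$ maximal inequality, approximation), the identification of $\bar f$ via the von~Neumann mean ergodic theorem together with the elementary pointwise computation on bounded coboundaries $g-g\circ T_{a}$, the randomized Vitali-type covering lemma in which translates $F_{n_{i}}x_{i}$ are selected stochastically in order of decreasing index with temperedness controlling the expected multiplicity, and the transference of the resulting covering bound from $G$ acting on itself to the general measure-preserving action. You also correctly flag that the covering lemma is where essentially all the new work lives --- in the classical $\mathbb{Z}$ case a deterministic Vitali argument suffices, and it is exactly the absence of a doubling property that forces the random selection.

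Two minor points worth tightening if you were to flesh this out. First, in step (i) the dense subclass must be dense in $L^{1}$, not merely $L^{2}$; this is fine since $\mathcal I$ together with coboundaries of $L^{\infty}$ functions is $L^{2}$-dense and $L^{2}\subseteq L^{1}$ densely, but it should be said explicitly, and the pointwise argument on coboundaries genuinely needs the $L^{\infty}$ bound on $g$ so that $\frac{|aF_{n}\triangle F_{n}|}{|F_{n}|}\|g\|_{\infty}\to 0$. Second, in step (iii) one must also control $\bar f_{2}$, not just $Mf_{2}$; this is handled by observing that $\bar f_{2}=\mathbb E(f_{2}\mid\mathcal I)$ satisfies $\|\bar f_{2}\|_{1}\le\|f_{2}\|_{1}<\varepsilon$ and applying Markov. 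Neither omission is a conceptual gap, and the overall architecture you describe is the correct one.
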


\subsection{A result of Bergelson}

Throughout our paper, we will make use of the following result of Bergelson, which is Theorem 1.1 in \cite{Bergelson}:

\begin{fact}
\label{bergelson} Suppose that $(X,\mathcal{B},\mu )$ is a probability space
and $(A_{n})$ is a sequence of measurable sets for which there is $a\in \r^{>0}$ such that $\mu (A_{n})\geq a$
for each $n$. Then there is infinite $P\subseteq \mathbb{N}$ 
%with $\overline{d}% (P)\geq a$ 
such that, for every finite $F\subseteq P$, we have $\mu
(\bigcap_{n\in F}A_{n})>0$.
\end{fact}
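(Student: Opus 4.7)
The plan is to extract a weak $L^2(\mu)$-limit of the indicator functions and then build $P$ by a nested recursion underneath the support of this limit.

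First, I would view each $\mathbf{1}_{A_n}$ as an element of $L^2(\mu)$; since $\|\mathbf{1}_{A_n}\|_2 \le 1$, the sequence is bounded, so by weak sequential compactness of bounded sets in the Hilbert space $L^2(\mu)$ there is a subsequence $(\mathbf{1}_{A_{n_k}})$ converging weakly to some $f \in L^2(\mu)$. Testing weak convergence against $\mathbf{1}_{\{f>1\}}$ and $\mathbf{1}_{\{f<0\}}$ forces $0 \le f \le 1$ a.e., and testing against $\mathbf{1}_X$ gives
\[
\int f \, d\mu \;=\; \lim_{k\to\infty} \mu(A_{n_k}) \;\geq\; a.
\]
In particular $B := \{x : f(x) > 0\}$ has $\mu(B) > 0$.

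The main observation is now that for every measurable $E \subseteq B$ with $\mu(E) > 0$ one has $\int_E f \, d\mu > 0$, so weak convergence against $\mathbf{1}_E$ yields
\[
\lim_{k\to\infty} \mu(A_{n_k}\cap E) \;=\; \int_E f\, d\mu \;>\; 0,
\]
and hence $\mu(A_{n_k} \cap E) > 0$ for all sufficiently large $k$. Using this, I would recursively produce indices $p_1 < p_2 < \cdots$, all drawn from the $n_k$'s, such that each nested set
\[
E_j \;:=\; B \cap A_{p_1} \cap \cdots \cap A_{p_j}
\]
has positive $\mu$-measure: given $E_j \subseteq B$ with $\mu(E_j) > 0$, the observation lets us pick $k$ large enough that $n_k > p_j$ and $\mu(A_{n_k}\cap E_j) > 0$, at which point we set $p_{j+1} := n_k$ and $E_{j+1} := A_{p_{j+1}} \cap E_j$.

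Setting $P := \{p_1, p_2, \ldots\}$, any finite $F \subseteq P$ is contained in $\{p_1, \ldots, p_j\}$ for some $j$, whence $\bigcap_{n \in F} A_n \supseteq E_j$ has positive measure. The only delicate point is the first step --- extracting a weak limit $f$ with $\int f\,d\mu \ge a$ and $0 \le f \le 1$ a.e.\ --- but this is routine from reflexivity of $L^2(\mu)$; once $f$ and $B$ are in hand, the recursion is immediate because the intersection measures $\mu(A_{n_k} \cap E_j)$ pass to a strictly positive limit at every stage.
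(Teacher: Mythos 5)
Your proof is correct. Note that the paper does not actually prove this fact---it is quoted as Theorem~1.1 of Bergelson's paper \cite{Bergelson}---so there is no in-paper argument to compare against; I can only assess your proof on its own terms. It is complete and sound: you extract a weak $L^2(\mu)$-limit $f$ of a subsequence $(\mathbf{1}_{A_{n_k}})$, verify $0 \le f \le 1$ a.e.\ and $\int f\,d\mu \ge a$ by testing against indicators, set $B := \{f>0\}$ (which has positive measure since $\int f\,d\mu \ge a > 0$), and run the nested recursion inside $B$ driven by the key observation that $\mu(A_{n_k}\cap E) \to \int_E f\,d\mu > 0$ for every positive-measure $E\subseteq B$. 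This is the standard Hilbert-space compactness proof of Bergelson's intersectivity lemma, essentially in the spirit of Bergelson's own argument. Two cosmetic remarks, neither affecting correctness: the bound $f\le 1$ is established but never used downstream, and if $L^2(\mu)$ is nonseparable the weak sequential compactness step deserves a word (restrict to the separable closed span of $\{\mathbf{1}_{A_n}\}$). A common variant instead takes a weak limit of the Ces\`aro averages $\frac{1}{N}\sum_{n\le N}\mathbf{1}_{A_n}$, which additionally shows that $\{n : \mu(A_n\cap E)>0\}$ has positive upper density for every positive-measure $E\subseteq B$; for the statement as given, your subsequence version is all that is needed.
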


\section{The high density case}

\label{high}

The main result of this section is the following:

\begin{thm}
\label{highdensity} Suppose that $G$ is a countable amenable group and $A\subseteq G$ is such that $\BD%
(A)>\frac{1}{2}$. Then there are injective sequences $(b_n)_{n\in \mathbb{N}%
} $ and $(c_n)_{n\in \mathbb{N}}$ in $G$ such that:

\begin{itemize}
\item $c_n\in A$ for all $n\in \mathbb{N}$;

\item $b_ic_j\in A$ for $i\leq j$;

\item $c_ib_j\in A$ for $i< j$.
\end{itemize}
\end{thm}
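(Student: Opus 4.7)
My plan is to use the Loeb-measure framework together with Lindenstrauss's pointwise ergodic theorem (Fact~\ref{ergodic}). Set $\alpha:=\BD(A)>1/2$ and, via Lindenstrauss's result, pick a tempered F\o lner sequence $(L_n)$ for $G$ with $|A\cap L_n|/|L_n|\to\alpha$. For an infinite hypernatural $\nu$, let $K:=L_\nu$ (a hyperfinite F\o lner approximation) and consider the Loeb probability space $(K,\mu_L)$ with $\tilde E:={}^{\ast}A\cap K$ of measure $\alpha$. Left multiplication by each $g\in G$ gives a $\mu_L$-preserving a.e.-defined map $T_g$ on $K$, so $G$ acts on $(K,\mu_L)$ by measure-preserving transformations with $\mu_L(g^{-1}\tilde E)=\alpha$ for all $g$. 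Applying Fact~\ref{ergodic} to $\mathbf 1_{\tilde E}$ along a tempered F\o lner sequence in $G$ yields a $G$-invariant $\bar f\in L^1(\mu_L)$ with $\int\bar f\,d\mu_L=\alpha$. Since $\{\bar f=0\}$ is $G$-invariant and $\bar f=\mathbb E[\mathbf 1_{\tilde E}\mid\mathcal I]$ (with $\mathcal I$ the invariant $\sigma$-algebra), one computes $\mu_L(\tilde E\cap\{\bar f=0\})=\int_{\{\bar f=0\}}\bar f\,d\mu_L=0$, so $\tilde E\subseteq\{\bar f>0\}$ modulo null sets.

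I would then build $(b_n)$ and $(c_n)$ in $G$ by the interleaved recursion $b_1,c_1,b_2,c_2,\ldots$, maintaining the invariants that the Loeb-measurable sets
\[
W_n:=\tilde E\cap\bigcap_{i=1}^n b_i^{-1}\tilde E,\qquad V_n:=\bigcap_{i=1}^n c_i^{-1}\tilde E
\]
both have positive Loeb measure. Both sets lie (mod null) in the $G$-invariant set $\{\bar f>0\}$ --- $W_n$ because $W_n\subseteq\tilde E$, and $V_n$ because $\{\bar f>0\}$ is $G$-invariant --- so that $\int_{W_n}\bar f\,d\mu_L>0$ and $\int_{V_n}\bar f\,d\mu_L>0$. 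The central measure-theoretic input is: whenever $Z\subseteq\{\bar f>0\}$ has $\mu_L(Z)>0$, averaging $g\mapsto\mu_L(Z\cap g^{-1}\tilde E)=\int_Z\mathbf 1_{\tilde E}(gx)\,d\mu_L(x)$ along a F\o lner sequence in $G$ converges by Fact~\ref{ergodic} to $\int_Z\bar f\,d\mu_L>0$, so by a Markov-type estimate the set of $g\in G$ for which $\mu_L(Z\cap g^{-1}\tilde E)$ is bounded below by a fixed positive constant has positive Banach density. Passing through $\mu_L(S^{\ast}\cap K)\leq\BD(S)$ also shows that the combinatorial constraint sets $\bigcap_{i\leq n}c_i^{-1}A$ (for the choice of $b_{n+1}$) and $A\cap\bigcap_{i\leq n+1}b_i^{-1}A$ (for the choice of $c_{n+1}$) have positive Banach density, and so contain infinitely many standard candidates.

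The main obstacle I anticipate is reconciling the two constraints on each newly-chosen element: $b_{n+1}$ must lie simultaneously in $\bigcap_{i\leq n}c_i^{-1}A$ and in the measure-theoretic ``good'' set $\{g:\mu_L(W_n\cap g^{-1}\tilde E)>0\}$, each of positive Banach density, yet two positive-Banach-density sets in $G$ need not intersect at all. This is where $\BD(A)>1/2$ is essential. I would exploit the hypothesis by working inside the $G$-invariant set $Y:=\{\bar f>1/2\}$, which has $\mu_L(Y)\geq 2\alpha-1>0$ (from $\int\bar f=\alpha$ and $\bar f\leq 1$). For $Z\subseteq Y$ of positive Loeb measure, $\int_Z\bar f\,d\mu_L>\tfrac12\mu_L(Z)$, which forces --- again by Markov --- the density of ``good'' $g$'s along a F\o lner sequence in $G$ to exceed $1/2$ asymptotically; arranging the induction on the refined sets $W_n\cap Y$ and $V_n\cap Y$ should then place the combinatorial constraint set at density exceeding $1/2$ along the same F\o lner sequence, producing infinitely many elements of the intersection. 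The dual argument (with $V_n$ in place of $W_n$) then yields $c_{n+1}$, and injectivity is trivially preserved by excluding the finitely many previous choices.
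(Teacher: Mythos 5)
Your approach shares the opening moves with the paper's -- Loeb measure on a F\o lner approximation $F_\nu$, Lindenstrauss's theorem applied to $\mathbf{1}_{{}^{\ast}A\cap F_\nu}$, the observation $\int\bar f=\alpha$ -- but then diverges, and the divergent part has a gap. You propose an interleaved induction that maintains two Loeb-measure invariants $\mu_L(W_n)>0$ and $\mu_L(V_n)>0$ and, at each step, must produce $b_{n+1}$ lying simultaneously in the combinatorial constraint set $\bigcap_{i\le n}c_i^{-1}A$ (a standard subset of $G$) and in the measure-good set $\{g:\mu_L(W_n\cap g^{-1}\tilde E)>0\}$. You correctly flag this intersection as the sticking point. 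But the fix you suggest -- restricting to $Y=\{\bar f>1/2\}$ so that a Markov estimate gives the measure-good set density exceeding $1/2$ along a F\o lner sequence -- only controls the \emph{measure-good} set. It does nothing to the combinatorial constraint set: $\bigcap_{i\le n}c_i^{-1}A$ is an intersection of $n$ sets of Banach density $\alpha$, and such an intersection can have density well below $1/2$ (indeed, arbitrarily close to $0$) along every F\o lner sequence, so the two sets need not meet. The remark that ``arranging the induction on $W_n\cap Y$ and $V_n\cap Y$ should place the combinatorial constraint set at density exceeding $1/2$'' is the missing step, and I don't see how it follows; $\mu_L(V_n)$ bounds $\BD\bigl(\bigcap_{i\le n}c_i^{-1}A\bigr)$ from \emph{below}, but $\mu_L(V_n)$ itself decays and is only kept positive, not above $1/2$.

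The paper closes this gap with two ingredients you don't use. First, it sharpens $\int\bar f=\alpha$ to $\bar f=\alpha$ a.e.: the key point is that $\bar f\le\alpha$ a.e., which follows because any point $\xi$ with $\limsup_n f_n(\xi)>\alpha$ would, by transfer, produce a F\o lner sequence $(F_nx_n)$ witnessing $\BD(A)>\alpha$. (Your bound $\mu_L(Y)\ge 2\alpha-1$ is weaker and not needed once one has this.) Second, and more importantly, the paper does not maintain a measure invariant for the $b$'s at all. It picks a single nonstandard witness $\xi\in{}^{\ast}A\setminus G$ where the ergodic averages converge to $\alpha$, defines $L:={}^{\ast}A\,\xi^{-1}\cap G$, and uses transfer from the fixed point $\xi$ to prove that $A\cap\bigcap_{x\in F}x^{-1}A$ is infinite for every finite $F\subset L$ while $\liminf_n|L\cap F_n|/|F_n|\ge\alpha$. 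With the $b$'s drawn from this fixed $L$ of lower density $\ge\alpha$ and the $c$'s drawn from a sequence $D$ in $A$, the hypothesis $\alpha>1/2$ enters exactly once: to give $\st\bigl(|{}^{\ast}L\cap d_n^{-1}{}^{\ast}A\cap F_\nu|/|F_\nu|\bigr)\ge 2\alpha-1>0$. Bergelson's intersectivity lemma (Fact~\ref{bergelson}) -- which you do not invoke -- is then applied to pass to a subsequence of $D$ along which all finite intersections $L\cap\bigcap_{i\le n}d_i^{-1}A$ are infinite, and the final diagonalization is straightforward. In short: the paper trades your two running measure invariants for one fixed combinatorially-transparent set $L$, and trades your per-step intersection argument for a single application of Bergelson's lemma; both of those trades are what make the inductive step actually close.
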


In the first subsection, we prove the analogous fact for subsets of the natural numbers as in this case we can avoid using Fact \ref{ergodic} and instead resort to more elementary methods.  We prove the case of a general amenable group in the second subsection.

\subsection{The case of the integers}

The main goal of this subsection is the following theorem.

\begin{thm}\label{highint}
Suppose that $A\subseteq \mathbb{N}$ is such that $\BD(A)>\frac{1}{2}$%
. Then there are infinite $B,C\subseteq \mathbb{N}$ with $C\subseteq A$ such that $B+C\subseteq
A $.
\end{thm}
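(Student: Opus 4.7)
The plan is to deploy the Loeb measure together with Bergelson's theorem (Fact \ref{bergelson}), extract an infinite candidate set for $B$, and construct $C$ via an alternating induction that exploits the commutativity of $\mathbb{Z}$. Let $\alpha := \BD(A) > 1/2$. Fix a hyperfinite interval $I = [M, N] \subseteq {}^*\mathbb{N}$ with $|{}^*A \cap I|/|I| \approx \alpha$, and let $\mu$ denote the Loeb probability measure on $I$; set $\tilde A := {}^*A \cap I$, so $\mu(\tilde A) = \alpha$. For each $k \in \mathbb{N}$, put $E_k := \{x \in I : x + k \in {}^*A\}$; since $k$ is finite, shifting $I$ by $k$ is infinitesimal, so $\mu(E_k) = \alpha$, and the inequality $\mu(X \cap Y) \geq \mu(X) + \mu(Y) - 1$ gives the uniform bound $\mu(\tilde A \cap E_k) \geq 2\alpha - 1 > 0$. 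Applying Fact \ref{bergelson} to the sequence $(\tilde A \cap E_k)_{k \in \mathbb{N}}$ produces an infinite $B_0 \subseteq \mathbb{N}$ such that $\mu(\tilde A \cap \bigcap_{k \in F} E_k) > 0$ for every finite $F \subseteq B_0$; translating back, the standard set $A \cap \bigcap_{k \in F}(A - k)$ has positive Banach density (in particular, is unbounded) for every finite $F \subseteq B_0$.

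I would then build $B = (b_n) \subseteq B_0$ and $C = (c_n) \subseteq A$ by interleaved induction, maintaining the invariant $b_i + c_j \in A$ for every $i, j \leq n$. Given $b_1 < \cdots < b_n$ and $c_1 < \cdots < c_n$: (a) pick $c_{n+1} > c_n$ in the unbounded set $A \cap \bigcap_{i \leq n}(A - b_i)$, so that $b_i + c_{n+1} \in A$ for $i \leq n$; (b) pick $b_{n+1} > b_n$ in $B_0 \cap \bigcap_{j \leq n+1}(A - c_j)$, so that $b_{n+1} + c_j \in A$ for $j \leq n+1$. Taking $B := \{b_n : n \in \mathbb{N}\}$ and $C := \{c_n : n \in \mathbb{N}\}$ then yields infinite sets with $C \subseteq A$ and $B + C \subseteq A$ by commutativity of addition in $\mathbb{Z}$.

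The main obstacle is executing step (b): one must verify that $B_0 \cap \bigcap_{j \leq n+1}(A - c_j)$ is unbounded. The Bergelson property of $B_0$ directly controls only the forward intersections $A \cap \bigcap(A - b_i)$, and $B_0$ itself may have Banach density zero, so a positive-density reverse intersection does not automatically meet $B_0$ infinitely often. The fix is to maintain in parallel a second Loeb-measure invariant $\mu\bigl(\bigcap_{j \leq n}({}^*A - c_j)\bigr) > 0$ throughout the induction. This is where $\alpha > 1/2$ genuinely bites: a Fubini-averaging argument on $I$ shows that, for any Loeb set $V$ of positive measure, the average of $\mu(V \cap ({}^*A - c))$ over $c$ in a hyperfinite subinterval of $I$ is $\alpha \cdot \mu(V) > 0$, so the set of $c \in A$ preserving the invariant has positive Banach density. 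One chooses $c_{n+1}$ satisfying both this condition and the constraint in (a), and applies Fact \ref{bergelson} a second time on the $c$-side to guarantee that the reverse intersection $\bigcap_{j \leq n+1}(A - c_j)$ continues to meet $B_0$ on an unbounded set at every stage, thereby closing the induction.
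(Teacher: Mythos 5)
Your first half is correct and closely parallels the paper's computation: the bound $\mu(\tilde A \cap E_k) \geq 2\alpha - 1 > 0$ is valid, and applying Fact~\ref{bergelson} produces an infinite $B_0 \subseteq \mathbb{N}$ with the property that $A \cap \bigcap_{k\in F}(A-k)$ has positive Banach density for every finite $F \subseteq B_0$. You also correctly identify the obstacle: there is no reason that $B_0 \cap \bigcap_{j \leq n+1}(A-c_j)$ should be infinite, because Bergelson's theorem controls only the forward intersections and places no lower bound on the density of $B_0$ itself.

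The proposed fix, however, does not close this gap, and the reason is structural. Whatever Loeb-measure invariant you maintain on $I$ — say $\mu\bigl(\bigcap_{j\leq n}({}^*A - c_j) \cap I\bigr) > 0$ — it tells you that $\bigcap_{j\leq n}(A-c_j)$ has positive Banach density, but it cannot tell you that this set meets $B_0$: since $B_0$ is a \emph{standard} set, ${}^*B_0 \cap I$ can have Loeb measure as small as $\BD(B_0)$, which your construction does not control and which may well be zero. A ``second application of Bergelson on the $c$-side'' produces a subsequence of $(c_n)$ with positive-measure intersections, but again in a Loeb space where $B_0$ is invisible; it does not manufacture elements of $B_0$ inside $\bigcap_j(A-c_j)$. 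The inclusion-exclusion inequality $\mu(X\cap Y) \geq \mu(X)+\mu(Y)-1$, which is what makes $\alpha > 1/2$ bite, requires \emph{both} sets to have large measure, and $B_0$ is only guaranteed to be infinite.

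The missing ingredient is precisely the paper's Lemma~\ref{easier}: one must first construct a set $L$ that simultaneously (i) has density $\geq \alpha$ along a designated sequence of intervals $(I_n)$, so that ${}^*L$ has Loeb measure $\geq \alpha$ on a suitable hyperfinite interval $I_\nu$, and (ii) has the intersection property that $A \cap \bigcap_{x\in F}(A-x)$ is infinite for finite $F \subseteq L$. With (i) in hand, one then builds a preliminary sequence $D=(d_n)\subseteq A$ satisfying $l_i + d_n \in A$ for $i\leq n$ and applies Bergelson to the \emph{opposite} side, namely to the sets ${}^*L \cap ({}^*A - d_n) \cap I_\nu$, each of which has Loeb measure $\geq 2\alpha - 1$ \emph{because} ${}^*L$ has measure $\geq\alpha$. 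Positive Loeb measure of finite intersections of these internal sets transfers to infinitude of $L\cap\bigcap_{i\leq n}(A-d_i)$, which is exactly what lets the alternating construction run. Getting a high-density $L$ is genuinely more work than a single Fubini average: the proof of Lemma~\ref{easier} uses an averaging computation followed by an overspill step, a nested sequence of internal sets $X_i$ of Loeb measure $>1-\epsilon^i$, a choice of a point $y_0$ in their intersection, and a translation to nearby $x_0 \in {}^*A$ so that $L := ({}^*A\cap(x_0+\mathbb{N}))-x_0$ is a genuine subset of $\mathbb{N}$ whose nonstandard extension retains density $\alpha$ on $I_\nu$. Your sketch gestures at the averaging step but omits the construction that converts it into a \emph{standard} set $L$ with the required density, and it applies Bergelson on the wrong side of the eventual $B+C$.
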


We first need a lemma.

\begin{lemma}\label{easier}
Suppose that $A\subseteq \n$ has $\BD(A)=\alpha>0$.  Suppose that $(I_n)$ is a sequence of intervals with $|I_n|\to \infty$ and for which $\lim_{n\to \infty}\frac{|A\cap I_n|}{|I_n|}=\alpha$.  Then there is $L\subseteq \n$ satisfying:
\begin{itemize}
\item $\limsup_{n\to \infty}\frac{|L\cap I_n|}{|I_n|}\geq \alpha$;
\item for every finite $F\subseteq L$, we have $A\cap \bigcap_{x\in F}(A-x)$ is infinite.
\end{itemize}
\end{lemma}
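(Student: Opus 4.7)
Proof plan.

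Fix $\nu \in {}^*\n \setminus \n$ and work in the Loeb probability space on $I_\nu$; by hypothesis $\mu_L({}^*A \cap I_\nu)=\alpha$. The plan is to pick a single nonstandard $y_0 \in {}^*A \cap I_\nu$ whose set of standard ``good shifts'' $L := \{x \in \n : y_0 + x \in {}^*A\}$ has the correct density along $(I_n)$ and such that every finite $F \subseteq L$ has infinitely many common additive witnesses in $A$.

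For each $n \in \n$ define the internal function $X_n : I_\nu \to [0,1]$ by $X_n(y) := |{}^*A \cap (I_n + y)|/|I_n|$. Two facts about the $X_n$'s are needed. First, a Fubini computation yields $\int X_n \, d\mu_L = \alpha$ for every $n$, because shifting $I_\nu$ by a standard amount $x$ changes ${}^*A\cap I_\nu$ by at most $2|x|$ elements, infinitesimal relative to $|I_\nu|$, so the internal average of $X_n$ over $I_\nu$ is infinitely close to $\alpha$. Second, transferring the definition of Banach density gives $X_n(y) \leq \alpha + \epsilon$ for every $y$ whenever $|I_n|$ is large enough (depending on $\epsilon$). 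The second fact gives $\limsup_n X_n(y) \leq \alpha$ pointwise, and the first, combined with reverse Fatou (valid since $X_n \leq 1$), gives $\int \limsup_n X_n \,d\mu_L \geq \alpha$. Together these force $\limsup_n X_n(y) = \alpha$ for Loeb-a.e.\ $y \in I_\nu$.

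Since $\mu_L({}^*A \cap I_\nu) = \alpha > 0$ and the standard elements of $I_\nu$ form a Loeb-null set (a countable union of singletons of measure zero), we may pick a nonstandard $y_0 \in {}^*A \cap I_\nu$ with $\limsup_n X_n(y_0) = \alpha$. Set $L := \{x \in \n : y_0 + x \in {}^*A\}$. The identity $|L \cap I_n|/|I_n| = X_n(y_0)$ immediately gives property (i). For property (ii), given a finite $F \subseteq L$ we have $y_0 \in {}^*A \cap \bigcap_{x \in F}({}^*A - x) = {}^*\bigl(A \cap \bigcap_{x \in F}(A - x)\bigr)$, and the nonstandardness of $y_0$ forces $A \cap \bigcap_{x \in F}(A - x)$ to be infinite. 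The main delicate point is ensuring that the almost-everywhere conclusion from reverse Fatou can be realised by a $y_0$ actually lying in ${}^*A$; this is precisely what the positivity $\mu_L({}^*A \cap I_\nu) = \alpha > 0$ provides.
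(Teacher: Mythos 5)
Your proof is correct, and it follows a genuinely more streamlined route than the paper's. Both arguments are built on the same two ingredients: a Fubini-type computation showing that the internal average over $y\in I_\nu$ of $\frac{|{}^*A\cap(I_n+y)|}{|I_n|}$ is infinitely close to $\alpha$, and the upper bound $\frac{|{}^*A\cap(I_n+y)|}{|I_n|}\lesssim\alpha$ forced by the Banach density hypothesis. Where you diverge is in how you convert these into an almost-everywhere statement for a sequence of \emph{standard} indices $n$. The paper first establishes the almost-everywhere conclusion for a fixed \emph{nonstandard} $M\leq K$ (where the density $\approx\alpha$ is immediate), then runs an inductive overspill argument to pull the property down to an increasing sequence $n_1<n_2<\cdots$ of standard indices and a nested family of internal sets $X_j$ of measure $>1-\epsilon^j$, finally intersecting the $X_j$ to get the set of good $y$'s. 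You instead feed the two ingredients directly into reverse Fatou (legitimate since $0\leq X_n\leq 1$), getting $\limsup_n X_n=\alpha$ Loeb-a.e.\ in one stroke; this cleanly eliminates the underflow/induction machinery. The other difference is cosmetic: the paper selects $y_0$ from the measure-$>0$ set $X$ and then locates $x_0\in{}^*A$ at finite distance from $y_0$, whereas you use the fact that ${}^*A\cap I_\nu$ itself has Loeb measure $\alpha>0$ to pick $y_0\in{}^*A$ directly, obviating the shift. Both steps are sound. Your use of reverse Fatou is a nice simplification worth noting.
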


\begin{proof}
It suffices to find $L\subseteq \n$ and $x_0\in {}^{\ast}\!{A}\setminus A$ for which $\limsup_{n\to \infty}\frac{|L\cap I_n|}{|I_n|}\geq \alpha$ and $x_0+L\subseteq {}^{\ast}\!{A}$.  Indeed, if we can find such $L$ and $x_0$, then given any finite $F\subseteq L$ and any finite $K\subseteq A$, the statement ``there exists $x_0\in {}^{\ast}\!{A}$ such that $x_0+F\subseteq {}^{\ast}\!{A}$ and $x_0\notin K$'' is true in the nonstandard extension, whence we can conclude that $A\cap \bigcap_{x\in F}(A-x)$ is infinite.

For each $n$, let $b_n$ denote the right endpoint of $I_n$.  By passing to a subsequence of $(I_n)$ if necessary, we may assume that the sequences $(b_n)$ and $(|I_n|)$ are strictly increasing.  Fix $H\in {}^{\ast}\n\setminus \n$ and note that $\frac{|{}^{\ast}\!{A}\cap I_H|}{|I_H|}\approx \alpha$.  

In what follows, we let $\mu$ denote the Loeb measure on $I_H$.  Also, for any $m\in {}^{\ast}\n$ (standard or nonstandard) and for any hyperfinite $X\subseteq {}^{\ast}\n$, we set $\delta_m(X):=\frac{|X|}{|I_m|}$.

We fix $K\in {}^{\ast}\n\setminus \n$ for which $2b_K\cdot \delta_H(I_K)\approx 0$ and consider $M\in {}^{\ast}\n\setminus \n$ with $M\leq K$.  We claim that, for $\mu$-almost all $x\in I_H$, we have $\delta_M({}^{\ast}\!{A}\cap (x+I_M))\approx \alpha$.  Indeed, since $\BD(A)=\alpha$, we can conclude that, for all $x\in I_H$, we have $\st(\delta_M({}^{\ast}\!{A}\cap (x+I_M)))\leq \alpha$.  We now compute
$$\frac{1}{|I_H|}\sum_{x\in I_H}\delta_M({}^{\ast}\!{A}\cap (x+I_M))=\frac{1}{|I_M|}\sum_{y\in I_M}\frac{1}{|I_H|}\sum_{x\in I_H}\chi_{{}^{\ast}\!{A}}(x+y).$$  
%Now $$\sum_{y\in I_M}\frac{1}{|I_H|}\sum_{x\in I_H}\chi_{{}^{\ast}\!{A}}(x+y)=\sum_{y\in I_M}\delta_H({}^{\ast}\!{A}\cap (y+I_H))\approx \sum_{y\in I_M}\delta_H({}^{\ast}\!{A}\cap I_H)$$ 
By the choice of $K$, it follows that $$\frac{1}{|I_H|}\sum_{x\in I_H}\delta_M({}^{\ast}\!{A}\cap (x+I_M))\approx \frac{1}{|I_M|}\sum_{y\in I_M}\delta_H({}^{\ast}\!{A}\cap I_H)\approx \alpha.$$  Coupled with our earlier observation, this proves the claim.

We now fix a standard positive real number $\epsilon<\frac{1}{2}$.  Inductively assume that we have chosen natural numbers $n_1<n_2<\cdots<n_{i-1}$ and internal subsets $X_1,X_2,\ldots,X_{i-1}\subseteq I_H$ such that, for each $j=1,2,\ldots,i-1$ and each $x\in X_j$, we have
$$\mu(X_j)>1-\epsilon^j \text{ and }\delta_{n_j}({}^{\ast}\!{A}\cap (x+I_{n_j}))\geq \alpha-\frac{1}{j}.$$

Consider the internal set 
$$Z:=\{M\in {}^{\ast}\n \ : \ n_{i-1}<M\leq K \text { and }$$ $$\delta_H\left(\left\{x\in I_H \ : \ \delta_M({}^{\ast}\!{A}\cap (x+I_M))\geq \alpha-\frac{1}{i}\right\}\right)>1-\epsilon^i\}.$$  Since $Z$ is internal and contains every nonstandard element of ${}^{\ast}\n$ below $K$, it follows that there is $n_i\in Z\cap \n$.  For this $n_i$, we set $$X_i:=\{x\in I_H \ : \ \delta_{n_i}({}^{\ast}\!{A}\cap (x+I_{n_i}))\geq \alpha-\frac{1}{i}\}.$$  Set $X:=\bigcap_{i=1}^\infty X_i$ and observe that $\mu(X)>0$.  Fix $y_0\in X$ and observe that, for all $i\in \n$, we have $$\delta_{n_i}({}^{\ast}\!{A}\cap (y_0+I_{n_i}))>\alpha-\frac{1}{i}.$$  

Set $x_0$ to be the minimum element of ${}^{\ast}\!{A} \cap [y_0,b_H]$ and set $$L:=({}^{\ast}\!{A}\cap (x_0+\n))-x_0.$$  Note that $x_0-y_0\in \n$ and $x_0+L\subseteq {}^{\ast}\!{A}$.  Since $x_0-y_0$ is finite, it follows that
$$\lim_{i\to \infty}\delta_{n_i}(L\cap I_{n_i})=\lim_{i\to \infty}\delta_{n_i}({}^{\ast}\!{A}\cap (x_0+I_{n_i}))=\lim_{i\to \infty}\delta_{n_i}({}^{\ast}\!{A}\cap (y_0+I_{n_i}))=\alpha.$$

\end{proof}

\begin{proof}[Proof of Theorem \ref{highint}]  Fix a sequence $(I_{n})$ of intervals such
that $|I_n|\to \infty$ and 
\begin{equation*}
\lim_{n\rightarrow \infty }\frac{|A\cap I_{n}|}{|I_{n}|}=\alpha .
\end{equation*}%
Fix $L$ as in the conclusion of Lemma \ref{easier}.  Let $L=(l_{n})$ be an increasing enumeration of $L$. Recursively define an increasing sequence $%
D:=(d_{n})_{n\in \mathbb{N}}$ from $A$ such that $l_{i}+d_{n}\in A$ for $i\leq n$%
. Fix $\nu \in {}^{\ast }\mathbb{N}\setminus \mathbb{N}$ such that $\st(\frac{|{}^{\ast }L\cap I_{\nu }|}{|I_{\nu }|})\geq \alpha $.  Recalling that $\alpha>\frac{1}{2}$, it follows that, for every $n\in \mathbb{N}$, we have 
\begin{equation*}
\st\left( \frac{|{}^{\ast }L\cap ({}^{\ast }A-d_n)\cap I_{\nu }|}{%
|I_{\nu }|}\right) \geq 2\alpha -1>0.
\end{equation*}%
By Fact \ref{bergelson}, we may, after passing to a subsequence of $(d_{n})$%
, assume that, for every $n\in \mathbb{N}$, we have 
\begin{equation*}
\st\left( \frac{|{}^{\ast }L\cap \bigcap_{i\leq n}({}^{\ast
}A-d_i)\cap I_{\nu }|}{|I_{\nu }|}\right) >0.
\end{equation*}%
In particular, this implies that, for every $n\in \mathbb{N}$, we have $%
L\cap \bigcap_{i\leq n}(A-d_i)$ is infinite. Take $b_{1}\in L$ arbitrary
and take $c_{1}\in D$ such that $b_{1}+c_{1}\in A$. Fix $b_{2}\in (L\cap
(A-c_1))\setminus \{b_{1}\}$ and take $c_{2}\in D$ such that $%
\{b_{1}+c_{2},b_{2}+c_{2}\}\subseteq A$. Take $b_{3}\in (L\cap (A-c_1)\cap
(A-c_2))\setminus \{b_{1},b_{2}\}$ and take $c_{3}\in D$ such that $%
\{b_{1}+c_{3},b_{2}+c_{3},b_{3}+c_{3}\}\subseteq A$. Continue in this way to
construct the desired $B$ and $C$.
\end{proof}

\subsection{The case of an arbitrary countable amenable group}

In this section, we assume that $G$ is a countable amenable group and prove Theorem \ref{highdensity}.  

Before proving Theorem \ref{highdensity}, we need a lemma analogous to Lemma \ref{easier}.

\begin{lemma}
\label{embed} Suppose that $(F_{n})$ is a tempered F\o lner sequence. If $%
A\subseteq G$ is such that $\limsup_{n\rightarrow \infty }\frac{|A\cap F_{n}|%
}{|F_{n}|}=\alpha $, then there is $L\subseteq G$ satifying:

\begin{itemize}
\item $\liminf_{n\to \infty} \frac{|L\cap F_n|}{|F_n|}\geq \alpha$;

\item for every finite $F\subseteq L$, we have $A\cap \bigcap_{x\in
F}x^{-1}A $ is infinite.
\end{itemize}
\end{lemma}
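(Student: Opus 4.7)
The plan is to adapt the strategy of Lemma \ref{easier}, replacing the elementary double-counting argument (which exploited the interval structure of $\mathbb{N}$) with Lindenstrauss's pointwise ergodic theorem (Fact \ref{ergodic}) applied to a Furstenberg-type dynamical system. As in Lemma \ref{easier}, I first reduce the problem to producing $L \subseteq G$ with $\liminf_{n \to \infty} \frac{|L \cap F_{n}|}{|F_{n}|} \geq \alpha$ together with a nonstandard $x_{0} \in {}^{\ast}A \setminus G$ such that $L x_{0} \subseteq {}^{\ast}A$: given such data, for any finite $F \subseteq L$ and any finite $K \subseteq G$, the internal statement ``there exists $y \in {}^{\ast}A$ with $Fy \subseteq {}^{\ast}A$ and $y \notin K$'' is witnessed by $x_{0}$, so by transfer $A \cap \bigcap_{x \in F} x^{-1}A$ is infinite.

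To build $L$ and $x_{0}$, I invoke the Furstenberg correspondence. Let $X = \{0,1\}^{G}$ be equipped with the product topology and the continuous $G$-action $(g\omega)(h) = \omega(hg)$; set $\omega_{0} = \chi_{A}$ and $\overline{X} = \overline{G \omega_{0}}$. By compactness, passing to a subsequence if necessary, I may assume that the empirical measures $\nu_{n} = \frac{1}{|F_{n}|} \sum_{g \in F_{n}} \delta_{g \omega_{0}}$ converge weak-$\ast$ to a probability measure $\mu$ on $\overline{X}$ and that $\frac{|A \cap F_{n}|}{|F_{n}|} \to \alpha$ along this subsequence; the F\o lner property forces $\mu$ to be $G$-invariant, and since the clopen set $\widetilde{A} := \{\omega : \omega(e) = 1\}$ satisfies $\int \chi_{\widetilde{A}}\, d\nu_{n} = \frac{|A \cap F_{n}|}{|F_{n}|}$, we get $\mu(\widetilde{A}) = \alpha$. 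Because the original sequence $(F_{n})$ remains tempered, Fact \ref{ergodic} applies to $(\overline{X}, \mu)$ with $f = \chi_{\widetilde{A}}$, producing $\overline{f} \in L^{1}(\mu)$ which is $G$-invariant, satisfies $\int \overline{f}\, d\mu = \alpha$, and is the pointwise $\mu$-a.e.\ limit of $A(F_{n}, f)$. By the mean ergodic theorem, $\overline{f}$ coincides $\mu$-a.e.\ with the conditional expectation $E[\chi_{\widetilde{A}} \mid \mathcal{I}]$, where $\mathcal{I}$ is the $G$-invariant $\sigma$-algebra.

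The main obstacle is selecting a generic point $\omega^{\ast}$ that actually lies inside $\widetilde{A}$. Since $\overline{f} \leq 1$ and $\int \overline{f}\, d\mu = \alpha > 0$, we have $\mu(\{\overline{f} \geq \alpha\}) > 0$ (otherwise $\overline{f} < \alpha$ $\mu$-a.e., contradicting $\int \overline{f}\, d\mu = \alpha$). Because $\{\overline{f} \geq \alpha\}$ is $\mathcal{I}$-measurable, the defining property of conditional expectation gives
\[
\mu(\widetilde{A} \cap \{\overline{f} \geq \alpha\}) = \int_{\{\overline{f} \geq \alpha\}} \chi_{\widetilde{A}}\, d\mu = \int_{\{\overline{f} \geq \alpha\}} \overline{f}\, d\mu \geq \alpha\, \mu(\{\overline{f} \geq \alpha\}) > 0.
\]
I choose $\omega^{\ast}$ in this positive-measure set at a point of pointwise convergence of the ergodic averages. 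Since $\omega^{\ast} \in \overline{G \omega_{0}}$, countable saturation yields $h \in {}^{\ast}G \setminus G$ with $\omega^{\ast}(g) = {}^{\ast}\chi_{A}(gh)$ for every $g \in G$. Setting $L := \{g \in G : \omega^{\ast}(g) = 1\}$ and $x_{0} := h$ then gives $x_{0} \in {}^{\ast}A$ (from $\omega^{\ast}(e) = 1$), $L x_{0} \subseteq {}^{\ast}A$, and $\frac{|L \cap F_{n}|}{|F_{n}|} = A(F_{n}, \chi_{\widetilde{A}})(\omega^{\ast}) \to \overline{f}(\omega^{\ast}) \geq \alpha$, completing the reduction.
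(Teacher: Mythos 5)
Your proof is substantially correct and takes a genuinely different route from the paper. The paper stays entirely inside the nonstandard universe: it puts the Loeb measure directly on the hyperfinite set $F_\nu$, observes that $G$ acts on a full-measure subset $E$ of $F_\nu$ by Loeb-measure-preserving bijections, and applies Fact \ref{ergodic} to \emph{that} system. The advantage is that the nonstandard shift comes for free as a generic point $\xi\in {}^{\ast}A\cap E$ with $\xi\notin G$ (the latter because $G\cap E$ is a countable union of Loeb-null singletons), and $L:={}^{\ast}A\xi^{-1}\cap G$ is defined immediately. The paper also dispenses with the conditional expectation: it proves $\bar f\le\alpha$ a.e.\ by a direct contradiction with $\BD(A)=\alpha$, then concludes $\bar f\equiv\alpha$ from $\int\bar f\,d\mu=\alpha$. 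Your route instead builds the classical Furstenberg system on the orbit closure in $\{0,1\}^G$, uses the characterization $\bar f = E[\chi_{\widetilde A}\mid\mathcal I]$ to find a positive-measure set inside $\widetilde A$ where $\bar f\ge\alpha$, and then re-imports nonstandard analysis at the very end via countable saturation. This is cleaner in one respect: you only need $\limsup_n|A\cap F_n|/|F_n|=\alpha$ and never invoke $\BD(A)=\alpha$, so your argument proves the lemma exactly as stated, whereas the paper's contradiction step implicitly uses $\alpha=\BD(A)$ (true in the only application, but not part of the lemma's hypotheses).

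There is, however, a genuine gap in the saturation step. From $\omega^*\in\overline{G\omega_0}$ you know that for each finite $S\subseteq G$ there is \emph{some} $g\in G$ with $\omega^*(s)={}\chi_A(sg)$ for all $s\in S$, and countable saturation then yields $h\in{}^{\ast}G$ with $\omega^*(g)={}^{\ast}\chi_A(gh)$ for all $g\in G$. But to force $h\notin G$ you need, for every finite $S$ and every finite $K\subseteq G$, a witness in $G\setminus K$; equivalently, the set $\{g\in G:\omega^*(s)=\chi_A(sg)\text{ for all }s\in S\}$ must be \emph{infinite} for every finite $S$. This can fail if $\omega^*$ is actually a translate $g_0\omega_0$ and $\mathrm{Stab}(\omega_0)$ is finite. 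The fix is short: if $\mathrm{Stab}(\omega_0)$ is infinite, the witness sets are automatically infinite whenever they are nonempty; if $\mathrm{Stab}(\omega_0)$ is finite, the orbit $G\omega_0$ is an infinite orbit in a $G$-invariant probability measure, hence $\mu$-null, so you may additionally require $\omega^*\notin G\omega_0$, and then $\omega^*$ is an accumulation point of $G\omega_0$ and the witness sets are infinite. You should make this case distinction explicit; as written, the claim that saturation produces $h\in{}^{\ast}G\setminus G$ is not justified.
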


\begin{proof}
Fix $\nu \in {}^{\ast }\mathbb{N}\setminus \mathbb{N}$ such that $\frac{%
|{}^{\ast }\!{A}\cap F_{\nu }|}{|F_{\nu }|}\approx \alpha $. Notice that, for
all $g\in G$, we have $\frac{|gF_{\nu }\triangle F_{\nu }|}{|F_{\nu }|}%
\approx 0$. Since $G$ is countable, there is a full measure (with respect to
the Loeb measure on $F_{\nu }$) subset $E$ of $F_{\nu }$ for which the map $%
(g,x)\mapsto gx:G\times E\rightarrow E$ defines a measure preserving action
of $G$ on $E$. For $\xi \in E$, we define 
\begin{equation*}
f_{n}(\xi ):=A\left( F_{n},\chi _{{}^{\ast }\!{A}\cap E}\right) \left( \xi
\right) =\frac{1}{|F_{n}|}\sum_{g\in F_{n}}\chi _{{}^{\ast }\!{A}\cap E}(g\xi )
\end{equation*}%
where $\chi _{{}^{\ast }\!{A}\cap E}$ denotes the characteristic function of $%
{}^{\ast }\!{A}\cap E$. Observe that 
\begin{equation*}
f_{n}(\xi )=\frac{|F_{n}\cap ({}^{\ast }\!{A}\cap E)\xi ^{-1}|}{|F_{n}|}\leq 
\frac{|F_{n}\cap {}^{\ast }\!{A}\xi ^{-1}|}{|F_{n}|}.\quad (\dagger )
\end{equation*}%
By Fact \ref{ergodic}, there is $\bar{f}\in L^{1}(\mu )$ such that $(f_{n})$
converges to $\bar{f}$ almost everywhere and in $L^{1}(\mu )$, whence $\int 
\bar{f}d\mu =\alpha $. (Here, $\mu $ denotes the restriction of the Loeb
measure on $F_{\nu }$ to $E$.)

We next claim that $\bar{f}$ is almost everywhere bounded above by $\alpha $%
. If this is not the case, then there is $k\in \mathbb{N}$ such that the set
of $\xi \in E$ for which $\bar{f}(\xi )\geq \alpha +\frac{1}{k}$ has
positive measure. Since $f_{n}$ converges to $\bar{f}$ almost everywhere,
there is $\xi \in E$ such that $\lim_{n\rightarrow \infty }f_{n}(\xi )\geq
\alpha +\frac{1}{k}$, whence, by $(\dagger )$, we have 
\begin{equation*}
\liminf_{n\rightarrow \infty }\frac{|F_{n}\xi \cap {}^{\ast }\!{A}|}{|F_{n}|}%
\geq \alpha +\frac{1}{k}.
\end{equation*}%
By transfer, for each $n\in 
%TCIMACRO{\U{2115} }%
%BeginExpansion
\mathbb{N}
%EndExpansion
$ there is $x_{n}\in G$ such that%
\begin{equation*}
|F_nx_n\cap A|=|F_n\xi\cap {}^{\ast }\!{A}|.
\end{equation*}%
Since $\left( F_{n}x_{n}\right) $ is also a F\o lner sequence for $G$ this
implies%
\begin{equation*}
\BD\left( A\right) \geq \limsup_{n}\frac{\left\vert F_{n}x_{n}\cap
A\right\vert }{\left\vert F_{n}\right\vert }\geq \alpha +\frac{1}{k}.
\end{equation*}%
This contradicts the fact that $\BD(A)=\alpha $.

By our claim and the fact that $\int \bar{f}d\mu =\alpha $, we see that $%
\bar{f}$ is almost everywhere equal to $\alpha $. In particular, there is $%
\xi \in {}^{\ast }\!{A}\cap E$ such that $\lim_{n\rightarrow \infty }f_{n}(\xi
)=\alpha $. Since $G\cap E$ has measure $0$, whence we can
further insist that $\xi \in {}^{\ast }\!{A}\setminus G$. Fix such $\xi $ and
set $L:={}^{\ast }\!{A}\xi ^{-1}\cap G$. By $(\dagger )$ and the choice of $\xi $%
, we have $\liminf_{n\rightarrow \infty }\frac{|L\cap F_{n}|}{|F_{n}|}\geq
\alpha $.

It remains to show that $A\cap \bigcap_{x\in F}x^{-1}A$ is infinite for
every finite subset $F$ of $L$. Fix such an $F$. For each $x\in F$, we have $%
x\xi \in {}^{\ast }\!{A}$. Since $\xi \notin G$, for any finite $K\subseteq G$,
the statement \textquotedblleft there exists $h\in {}^{\ast }\!{A}$ such that $%
h\notin K$ and, for every $x\in F$, we have $xh\in {}^{\ast }\!{A}$%
\textquotedblright\ holds in the nonstandard extension. Thus, by transfer,
for any given finite subset $K$ of $G$, there is $h\in A$ such that $h\notin
K$ and $xh\in A$ for each $x\in F$.
\end{proof}

The proof of Theorem \ref{highdensity} from Lemma \ref{embed} is almost the same as the proof of Theorem \ref{highint} from Lemma \ref{easier}, but we include the proof for the sake of the reader.

\begin{proof}[Proof of Theorem \ref{highdensity}] Fix $A\subseteq G$ such that $\alpha :=\BD(A)>\frac{1}{2}$. Fix a tempered F\o lner sequence $(F_{n})$ for $G$ such
that 
\begin{equation*}
\lim_{n\rightarrow \infty }\frac{|A\cap F_{n}|}{|F_{n}|}=\alpha .
\end{equation*}%
Fix $L$ as in the conclusion of Lemma \ref{embed}. Fix an injective
enumeration $L=(l_{n})$ of $L$. Recursively define an injective sequence $%
D=(d_{n})_{n\in \mathbb{N}}$ from $A$ such that $l_{i}d_{n}\in A$ for $i\leq n$%
. Fix $\nu \in {}^{\ast }\mathbb{N}\setminus \mathbb{N}$. For any $g\in G$,
we have 
\begin{equation*}
\st\left( \frac{|g{}^{\ast }\!{A}\cap F_{\nu }|}{|F_{\nu }|}\right) =\st \left( \frac{|g{}^{\ast }\!{A}\cap gF_{\nu }|}{|F_{\nu }|}\right) =\st%
\left( \frac{|{}^{\ast }\!{A}\cap F_{\nu }|}{|F_{\nu }|}\right) =\alpha \text{;}
\end{equation*}%
since we also have $\st(\frac{|{}^{\ast }L\cap F_{\nu }|}{|F_{\nu }|}%
)\geq \alpha $, it follows that, for every $n\in \mathbb{N}$, we have 
\begin{equation*}
\st\left( \frac{|{}^{\ast }L\cap d_{n}^{-1}{}^{\ast }\!{A}\cap F_{\nu }|}{%
|F_{\nu }|}\right) \geq 2\alpha -1>0.
\end{equation*}%
By Fact \ref{bergelson}, we may, after passing to a subsequence of $(d_{n})$%
, assume that, for every $n\in \mathbb{N}$, we have 
\begin{equation*}
\st\left( \frac{|{}^{\ast }L\cap \bigcap_{i\leq n}d_{i}^{-1}{}^{\ast
}A\cap F_{\nu }|}{|F_{\nu }|}\right) >0.
\end{equation*}%
In particular, this implies that, for every $n\in \mathbb{N}$, we have $%
L\cap \bigcap_{i\leq n}d_{i}^{-1}A$ is infinite. Take $b_{1}\in L$ arbitrary
and take $c_{1}\in D$ such that $b_{1}c_{1}\in A$. Fix $b_{2}\in (L\cap
c_{1}^{-1}A)\setminus \{b_{1}\}$ and take $c_{2}\in D$ such that $%
\{b_{1}c_{2},b_{2}c_{2}\}\subseteq A$. Take $b_{3}\in (L\cap c_{1}^{-1}A\cap
c_{2}^{-1}A)\setminus \{b_{1},b_{2}\}$ and take $c_{3}\in D$ such that $%
\{b_{1}c_{3},b_{2}c_{3},b_{3}c_{3}\}\subseteq A$. Continue in this way to
construct the desired $B$ and $C$.
\end{proof}

We say that $(F_{n})$ is a \emph{two-sided F\o lner sequence} for $G$ if,
for all $g\in G$, we have 
\begin{equation*}
\lim_{n\rightarrow \infty }\frac{|(gF_{n}\triangle F_{n})|+|(F_{n}g\triangle
F_{n})|}{|F_{n}|}=0.
\end{equation*}%
Of course, if $G$ is abelian, then every F\o lner sequence is two-sided. If $%
G$ is amenable, then two-sided F\o lner sequences for $G$ exist. However, it is unclear, given $A\subseteq G$ with positive Banach
density,  whether or not there is a two-sided F\o lner sequence for $G$ witnessing the
Banach density of $A$. 

If we repeat the previous proof with $Ad_n^{-1}$ instead of $d_n^{-1}A$, we
get the following result.

\begin{thm}
Suppose that $(F_{n})$ is a two-sided F\o lner sequence for $G$ and $%
A\subseteq G$ is such that $\lim_{n\rightarrow \infty }\frac{|A\cap F_{n}|}{%
|F_{n}|}=\BD(A)>\frac{1}{2}$. Then there are infinite $B,C\subseteq G$
with $C\subseteq A$ such that $BC\subseteq A$.
\end{thm}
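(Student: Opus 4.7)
The plan is to repeat the proof of Theorem \ref{highdensity} line-by-line, replacing every occurrence of the left translate $d_n^{-1}A$ by the right translate $Ad_n^{-1}$, as the sentence preceding the statement suggests. Let $\alpha=\BD(A)$. First, I would replace $(F_n)$ by a tempered subsequence (invoking Lindenstrauss's theorem; both two-sidedness and the limit $\alpha$ pass to subsequences), so that Lemma \ref{embed} supplies $L\subseteq G$ with $\liminf_n |L\cap F_n|/|F_n|\geq \alpha$ and with $A\cap\bigcap_{x\in F} x^{-1}A$ infinite for every finite $F\subseteq L$. I would then enumerate $L=(l_n)$ injectively and, exactly as in the proof of Theorem \ref{highdensity}, recursively build an injective sequence $D=(d_n)\subseteq A$ with $l_i d_n\in A$ for every $i\leq n$; this step uses only the left F{\o}lner property and so goes through unchanged.

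The essential new input is the two-sidedness of $(F_n)$. Fixing $\nu\in {}^{\ast}\mathbb{N}\setminus\mathbb{N}$, the condition $|F_\nu g\triangle F_\nu|/|F_\nu|\approx 0$ for every $g\in G$ would give
\[
\st\!\left(\frac{|{}^{\ast}A\, d_n^{-1}\cap F_\nu|}{|F_\nu|}\right)=\st\!\left(\frac{|{}^{\ast}A\cap F_\nu d_n|}{|F_\nu|}\right)=\alpha,
\]
and combined with $\st(|{}^{\ast}L\cap F_\nu|/|F_\nu|)\geq\alpha$ this yields $\st(|{}^{\ast}L\cap {}^{\ast}A\,d_n^{-1}\cap F_\nu|/|F_\nu|)\geq 2\alpha-1>0$. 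Fact \ref{bergelson} would then let me pass to a subsequence of $(d_n)$ so that
\[
\st\!\left(\frac{|{}^{\ast}L\cap \bigcap_{i\leq n} {}^{\ast}A\, d_i^{-1}\cap F_\nu|}{|F_\nu|}\right)>0
\]
for every $n$, which forces $L\cap\bigcap_{i\leq n} A d_i^{-1}$ to be infinite for every $n$.

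Finally, I would mirror the inductive selection at the end of the proof of Theorem \ref{highdensity}, but with $c_i^{-1}A$ replaced throughout by $A c_i^{-1}$: pick $b_1\in L$ arbitrary and $c_1\in D$ with $b_1c_1\in A$, and at stage $n$ choose $b_n\in L\cap\bigcap_{i<n} A c_i^{-1}$ distinct from $b_1,\ldots,b_{n-1}$ (possible since a finite intersection of the form just considered is infinite), then take $c_n\in D$ far enough along in the enumeration that $b_ic_n\in A$ for every $i\leq n$. The condition $b_n\in Ac_i^{-1}$ gives $b_nc_i\in A$ for $i<n$, while the choice of $c_n$ gives $b_ic_n\in A$ for $i\leq n$; together these yield $BC\subseteq A$, and $C\subseteq D\subseteq A$ by construction. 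The only place where the two-sidedness hypothesis is truly needed is the density computation for $|{}^{\ast}A\,d_n^{-1}\cap F_\nu|$; every other ingredient carries over verbatim from the proof of Theorem \ref{highdensity}, so there is no serious obstacle to overcome.
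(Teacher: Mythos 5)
Your proposal is correct and is exactly the argument the paper intends: the paper's proof is the one-line remark ``repeat the previous proof with $Ad_n^{-1}$ instead of $d_n^{-1}A$,'' and you have filled in the details faithfully, including the observation that one must first pass to a tempered subsequence of $(F_n)$ to invoke Lemma \ref{embed}, and the correct identification of the single place where two-sidedness enters (the estimate $\st(|{}^{\ast}A\,d_n^{-1}\cap F_\nu|/|F_\nu|)=\alpha$, via $|F_\nu d_n\triangle F_\nu|/|F_\nu|\approx 0$). Your bookkeeping at the end also correctly explains why, unlike in Theorem \ref{highdensity}, the two controlled families of products ($b_ic_j\in A$ for $i\le j$ from the choice of $c_n$, and $b_nc_i\in A$ for $i<n$ from $b_n\in Ac_i^{-1}$) now point the same way and hence give the full containment $BC\subseteq A$.
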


Let us end this section by showing how to derive Theorem \ref{highint} from Theorem \ref{highdensity} directly.  Suppose that $A\subseteq \mathbb{N}$ has Banach density exceeding $\frac{1}{2}
$. Then $\BD(A)>\frac{1}{2}$ \emph{when viewed as a subset of }$%
\mathbb{Z}$. By Theorem \ref{highdensity}, there are infinite sequences $%
B,C\subseteq \mathbb{Z}$ such that $C\subseteq A$ and $B+C\subseteq A$.
Since $C\subseteq A\subseteq \mathbb{N}$, this forces all but finitely many
elements of $B$ to belong to $\mathbb{N}$; replacing $B$ with $B\cap \mathbb{%
N}$ yields the desired result.

%In connection with the previous result, we point out the following example:
%
%\begin{example}
%If $\BD(A)=1$, then there are infinite $B,C\subseteq \n$ such that $B+C\subseteq A$.  Indeed, since $\BD(A)=1$, we can find an increasing sequence $(a_n)$ such that $[a_n,a_n+n]\subseteq A$.  Set $b_1:=0$ and $c_1:=a_1$.  Supposing that $b_1,\ldots,b_i,c_1,\ldots,c_i$ have been constructed, we define $b_{i+1}:=a_{c_i}$ and $c_{i+1}:=a_{b_{i+1}}$.  We leave it to the reader to verify that $b_i+c_j\in A$ for all $i,j$.
%\end{example}
%
%The previous example is interesting because we can have sets $A$ with $\BD(A)=1$ and $\overline{d}(A)=0$, whence the assumptions of Theorem \ref{highdensity} are not met, and yet $A$ still satisfies the conclusion of the $B+C$ conjecture.

\section{A one-shift result for sets of positive Banach density}

\label{general}

The main result of this section is the following.

\begin{thm}
\label{smalldensity} If $A\subseteq G$ has positive Banach density, then
there are injective sequences $\left( b_{n}\right) _{n\in 
%TCIMACRO{\U{2115} }%
%BeginExpansion
\mathbb{N}
%EndExpansion
}$ and $\left( c_{n}\right) _{n\in 
%TCIMACRO{\U{2115} }%
%BeginExpansion
\mathbb{N}
%EndExpansion
}$ in $G$ and $h,h^{\prime }\in G$ such that:

\begin{itemize}
\item $c_{n}\in A$ for each $n$;

\item $b_{i}c_{j}\in hA$ for $i\leq j$;

\item $c_{j}b_{i}\in h^{\prime }A$ for $i< j$.
\end{itemize}
\end{thm}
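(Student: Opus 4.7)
My plan is to reduce to Theorem \ref{highdensity} by the strategy the authors sketch for the integer case: first boost the Banach density of $A$ by passing to a union of finitely many left translates whose union exceeds $\frac{1}{2}$, then apply Theorem \ref{highdensity} to that union, and finally use Ramsey's theorem to identify the shifts $h$ and $h'$.

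For Step 1 (density boost), I would establish the standard fact that, given $\alpha := \BD(A) > 0$, there exist $g_1, \ldots, g_k \in G$ such that $A' := \bigcup_{i=1}^k g_i A$ satisfies $\BD(A') > \frac{1}{2}$. The argument is iterative: given a current union of left translates with Loeb-density $\gamma$ on a F\o lner approximation $F_\nu$ witnessing $\BD(A)$, each translate $g{}^*A$ has Loeb-density $\approx\alpha$ on $F_\nu$ (by the F\o lner property applied to standard $g$), and an averaging argument (made rigorous via the measure-preserving $G$-action on a full-measure subset of $F_\nu$ as constructed in Lemma \ref{embed}) produces a translate $g_i A$ with overlap at most $\approx\gamma\alpha$ with the current union. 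The union's density then grows to at least $\gamma+\alpha(1-\gamma)$, and since $1-\gamma$ decays geometrically, $O(\log(1/\alpha))$ steps suffice.

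For Step 2, I apply Theorem \ref{highdensity} to $A'$ to obtain injective sequences $(\beta_n),(\gamma_n)$ in $G$ with $\gamma_n\in A'$, $\beta_i\gamma_j\in A'$ for $i\leq j$, and $\gamma_i\beta_j\in A'$ for $i<j$. Each such element belongs to some $g_l A$, giving finite $k$-colorings of $\n$ (from the singletons) and of $[\n]^2$ (from the pair-indexed products). Iterated applications of Ramsey's theorem yield an infinite subsequence on which all the colorings are constant: $\gamma_n\in g_{i_0}A$, $\beta_i\gamma_j\in g_{l_0}A$ for $i\leq j$, and $\gamma_i\beta_j\in g_{m_0}A$ for $i<j$. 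Setting $c_n:=g_{i_0}^{-1}\gamma_n\in A$ and $b_n:=\beta_n g_{i_0}$, the first two conclusions of Theorem \ref{smalldensity} hold directly with $h:=g_{l_0}$, since $b_i c_j=\beta_i g_{i_0}g_{i_0}^{-1}\gamma_j=\beta_i\gamma_j$.

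The hard part will be the third conclusion $c_j b_i\in h'A$ for $i<j$, which requires control of the product $\gamma_j\beta_i$ (larger-indexed $\gamma$ on the left), whereas Theorem \ref{highdensity} only supplies $\gamma_i\beta_j$ (smaller-indexed $\gamma$ on the left); in a non-abelian group these are different products. To address this, I would strengthen the inductive construction inside the proof of Theorem \ref{highdensity}: at stage $n$, having fixed $\beta_1,\ldots,\beta_{n-1}$, I would pick $\gamma_n$ from $A'\cap\bigcap_{i<n}\beta_i^{-1}A'\cap\bigcap_{i<n}A'\beta_i^{-1}$, which forces \emph{both} $\beta_i\gamma_n\in A'$ and $\gamma_n\beta_i\in A'$ for $i<n$. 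Feasibility comes from a two-sided application of Fact \ref{bergelson} to the sequences $\beta_i^{-1}A'$ and $A'\beta_i^{-1}$, each of Banach density exceeding $\frac{1}{2}$. After re-running Ramsey, $\gamma_j\beta_i\in g_{m_0'}A$ for $i<j$, and then $c_j b_i=g_{i_0}^{-1}\gamma_j\beta_i g_{i_0}\in g_{i_0}^{-1}g_{m_0'}Ag_{i_0}$, which is of the form $h'A$ with $h':=g_{m_0'}$ in the abelian case. For general non-abelian $G$, I would instead perform the Ramsey extraction on \emph{two-sided} cosets $g_l A g_r$ from the outset so that conjugation by $g_{i_0}$ is absorbed into the coloring and the final expression is again a left translate of $A$.
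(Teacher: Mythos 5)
Your overall strategy matches the paper's: boost the Banach density of $A$ to exceed $\frac{1}{2}$ via finitely many left translates, apply Theorem \ref{highdensity} to the fattened set, and extract the shifts with Ramsey's theorem. Where you genuinely diverge is in Step 1. The paper's Lemma \ref{fat} derives the density boost from the Pogorzelski--Schwarzenberger quasi-tiling theorem (Fact \ref{Lemma: Pogo-Schwa}), a fairly heavy external ingredient. Your proposed iterative Loeb-measure averaging is a legitimate and arguably more self-contained alternative: the key point, which you should make explicit, is that on a F\o lner approximation $F_\nu$ witnessing $\BD(A)=\alpha$ as a \emph{limit}, the Lindenstrauss ergodic average $\bar f$ of $\chi_{{}^*\!A}$ is a.e.\ equal to the \emph{constant} $\alpha$ (this is exactly what the proof of Lemma \ref{embed} establishes), and this is what makes the average of $\mu(g^{-1}{}^*\!A\cap B')$ over $g$ in a F\o lner set tend to $\alpha\mu(B')$ rather than to some larger quantity governed by a nontrivial invariant factor. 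Once that is in place, iterating on the \emph{same} $F_\nu$ (so you never need $\bar f$ to be constant for the fattened set, only for $A$) drives the Loeb density above $\frac{1}{2}$ in $O(1/\alpha)$ steps (not $O(\log(1/\alpha))$, a minor slip), and Loeb density on a F\o lner approximation is a lower bound for Banach density.

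Your Step 2/3 concern about the non-abelian case is, however, where the proposal has a real gap, and it is worth being honest that the paper itself does not spell this out. Two issues arise once $G$ is non-abelian. First, after Ramsey one obtains $\gamma_n\in q_0A$, $\beta_i\gamma_j\in q_1A$ for $i\le j$, and $\gamma_i\beta_j\in q_2A$ for $i<j$; normalizing $c_n:=q_0^{-1}\gamma_n\in A$ and $b_n:=\beta_nq_0$ preserves the first two bullets but turns the third into $c_ib_j\in q_0^{-1}q_2Aq_0$, which is a left translate of $Aq_0$, not of $A$ --- your ``absorb the conjugation into a two-sided coset coloring'' remark points at this but is not a proof. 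Second, your proposed strengthening of the Theorem \ref{highdensity} construction (choosing $\gamma_n$ also in $\bigcap_{i<n}A'\beta_i^{-1}$ via a two-sided Bergelson step) requires the right translates ${}^*\!A'\beta_i^{-1}$ to have Loeb density bounded below on $F_\nu$, which is exactly where one needs a two-sided F\o lner sequence witnessing $\BD(A)$; the paper explicitly flags this as unclear. Note also that the paper's third bullet reads $c_jb_i\in h'A$ for $i<j$, whereas what the Ramsey argument actually produces (and what matches both Theorem \ref{highdensity} and the integer proof under commutativity) is $c_ib_j\in h'A$ for $i<j$; this looks like a typo in the statement, and your ``hard part'' discussion conflates the two forms. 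The safest conclusion is: your density boost is a sound alternative, but your handling of the non-abelian Ramsey step is incomplete, and you should flag clearly which products are actually controlled and what a left translate of $A$ can be recovered after normalizing $c_n\in A$.
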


The proof proceeds in two steps.  First, we show that we can ``fatten'' $A$ to a set $QA$, where $Q\subseteq G$ is finite, for which $\BD(QA)>\frac{1}{2}$.  We then apply Theorem \ref{highdensity} to $QA$ and apply Ramsey's theorem to obtain the desired result.  The first step was done in \cite{Hindman2} in the case of the natural numbers, so we cover this case separately for those readers who are primarily interested in the case of subsets of the natural numbers.   

\subsection{The case of the integers}

\begin{df}
For $A\subseteq \n$ and $n\in\mathbb{N}$, we define $A_{[n]}\subseteq \n$
by declaring $k\in A_{[n]}$ iff $[kn,kn+n-1]\cap A\neq\varnothing$. \ In other words, if the natural
numbers are partitioned into equal sized blocks of length $n$, then $A_{[n]}$
is the sequence of the ``block numbers'' that intersect $A$. 
\end{df}

The following is Theorem 3.8 in \cite{Hindman2}.

\begin{fact}\label{fatintegers}
For any $A$ with $\BD(A)>0$ and any $\epsilon>0$, there exists $n\in \n$ such that
$\BD(A_{[n]})\geq1-\epsilon$.
\end{fact}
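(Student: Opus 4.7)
The plan is to establish the stronger statement $\lim_{n\to\infty}\BD(A_{[n!]})=1$, which immediately yields the claim for any given $\epsilon > 0$ by choosing $n = k!$ with $k$ sufficiently large. First I observe the monotonicity: whenever $m\mid n$ the identity $A_{[n]}=(A_{[m]})_{[n/m]}$ holds, and the block-compression $S\mapsto S_{[k]}$ never decreases upper Banach density, since any interval $J$ aligned with the size-$k$ grid satisfies $|S_{[k]}\cap J_{[k]}|\geq |S\cap J|/k$. Therefore the sequence $(\BD(A_{[n!]}))_n$ is non-decreasing; denote its limit by $\beta\leq 1$. It suffices to rule out $\beta<1$.

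Assume for contradiction that $\beta<1$, and set $\gamma:=1-\beta>0$. Passing to the countably saturated nonstandard universe, choose $H\in{}^{\ast}\n\setminus\n$ and a hyperfinite interval $I\subseteq{}^{\ast}\n$ of length $H$ with $|{}^{\ast}A\cap I|/H\approx \BD(A)=:\alpha$, and equip $I$ with its Loeb probability measure $\mu$. For each $k\in\n$, partition $I$ into consecutive size-$k!$ blocks (edge effects being infinitesimal), and let $U_k\subseteq I$ be the internal set obtained as the union of those blocks disjoint from ${}^{\ast}A$. Since $\BD(A_{[k!]})\leq\beta$, transfer yields $\mu(U_k)\geq\gamma$; and since $k!\mid(k+1)!$ the partitions refine, giving $U_{k+1}\subseteq U_k$, so the Loeb-measurable set $U:=\bigcap_k U_k$ satisfies $\mu(U)\geq\gamma>0$.

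The main obstacle is to convert the positive Loeb measure of $U$ into a contradiction. Each $\xi\in U$ lies in an empty size-$k!$-block of ${}^{\ast}A$ for every $k$, so no element of ${}^{\ast}A$ is within any standard distance of $\xi$. For Loeb-almost every $\xi\in U$ the offset $(\xi-1)\bmod k!$ is Loeb-generic, so any standard translate $\xi+m$ also lies in the same size-$k!$-block as $\xi$ whenever $k!>m$; thus $U$ is essentially invariant under every standard shift, modulo null sets. To close the argument I would apply Lindenstrauss' pointwise ergodic theorem (Fact \ref{ergodic}) to the resulting measure-preserving $\mathbb{Z}$-action on $(I,\mu)$ with $f=\chi_{{}^{\ast}A\cap I}$: the limit $\bar f\in L^{1}(\mu)$ is translation-invariant with $\int\bar f\,d\mu=\alpha$, and the defining property of $U$ forces $\bar f=0$ on $U$, pushing the entire $\alpha$-mass onto the complement $I\setminus U$ of measure at most $\beta$. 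A careful averaging then produces a standard $n$ and a specific window of block indices where the density of $A_{[n]}$ exceeds $\beta$, contradicting the very definition of $\beta$.
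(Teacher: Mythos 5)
The paper does not prove Fact \ref{fatintegers} itself; it cites Hindman's Theorem 3.8 from \cite{Hindman2}, whose argument is elementary and combinatorial. Your route via Loeb measure and Lindenstrauss' ergodic theorem is therefore genuinely different from the cited source, though it is very much in the spirit of the nonstandard methods the paper uses elsewhere (compare Lemma \ref{easier} and especially Lemma \ref{embed}).

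The skeleton of your argument is sound. The monotonicity $A_{[n]}=(A_{[m]})_{[n/m]}$ for $m\mid n$ is correct, as is the fact that block-compression does not decrease Banach density, so $\beta:=\lim_n\BD(A_{[n!]})$ exists. Given $\beta<1$, the transfer argument for $\mu(U_k)\geq\gamma$ is legitimate, the nesting $U_{k+1}\subseteq U_k$ holds because the block partitions refine, and your observation that Loeb-almost every $\xi\in U$ is ``deep inside'' every $k!$-block (so that no standard translate of $\xi$ meets ${}^{\ast}A$, and $U$ is invariant modulo null sets under the shift) is correct.

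The last step is where the write-up goes wrong. After applying Fact \ref{ergodic} you correctly obtain $\bar f\in L^1(\mu)$ with $\int\bar f\,d\mu=\alpha$ and $\bar f=0$ a.e.\ on $U$, but the stated conclusion --- ``a specific window of block indices where the density of $A_{[n]}$ exceeds $\beta$, contradicting the definition of $\beta$'' --- is not where the contradiction lies, and as phrased it does not follow: a short window of block indices can have $A_{[n]}$-density arbitrarily close to $1$ without violating $\BD(A_{[n!]})\leq\beta$, and I see no way to manufacture a \emph{long} such window from your setup. The contradiction instead comes from $\BD(A)=\alpha$ itself. Concretely, for Loeb-a.e.\ $\xi$ and every standard $n$, the quantity $\frac1n|{}^{\ast}A\cap[\xi+1,\xi+n]|$ is, by transfer, bounded above by $s_n:=\sup_m\frac{|A\cap[m,m+n-1]|}{n}$, and $s_n\downarrow\alpha$; hence $\bar f\leq\alpha$ a.e. Combining this pointwise bound with $\bar f\equiv 0$ on $U$ and $\mu(I\setminus U)\leq\beta$ gives
\[
\alpha=\int\bar f\,d\mu=\int_{I\setminus U}\bar f\,d\mu\leq\alpha\,\mu(I\setminus U)\leq\alpha\beta,
\]
which, since $\alpha>0$, forces $\beta\geq 1$. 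That is the contradiction. So your proof is salvageable and in fact close to complete; you should replace the vague ``careful averaging'' sentence with the explicit a.e.\ upper bound $\bar f\leq\alpha$ and the one-line computation above, and drop the (incorrect) claim that the contradiction is with the definition of $\beta$.
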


%\begin{proof}
%Set $\BD(A):=\alpha>0$. Then there exists $n\in \n$ such that on any interval
%$I$ of length $n$, we have $$\frac{\left\vert A\cap I\right\vert}{n}\leq \alpha\left(1+\frac{\epsilon}{2}\right).$$
%There also exist arbitrarily long intervals of the form $[in,jn-1]$ such
%that $$\frac{\left\vert A\cap\lbrack in,jn-1]\right\vert}{nj-ni} \geq \alpha\left(1-\frac{\epsilon
%}{2}\right).$$ Taken together, these two inequalities imply that
%
%$$\frac{\left\vert A_{[n]}\cap\lbrack i,j-1]\right\vert}{(j-i)\alpha(1+\frac{\epsilon}{2})}\geq
%\alpha(1-\frac{\epsilon}{2}).$$
%\noindent Thus there exist arbitrarily long intervals of the form $[i,j-1]$ on which
%$$\frac{\left\vert A_{[n]}\cap\lbrack i,j-1]\right\vert}{j-i}\geq\frac{1-\frac{\epsilon
%}{2}}{1+\frac{\epsilon}{2}} > 1-\epsilon,$$ whence $\BD(A_{[n]})\geq1-\epsilon$.
%\end{proof}

We are now ready to prove the one-shift result in the case of subsets of the natural numbers.

\begin{thm}\label{twotranslate}
If $A\subseteq \n$ is such that $\BD(A)>0$, then there exist infinite
sets $B,C\subseteq \n$ and $k\in \z$ such that $B+C$ $\subset A\cup(A+k)$.
\end{thm}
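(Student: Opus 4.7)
The plan is to use Fact~\ref{fatintegers} to ``fatten'' $A$ into a set of Banach density greater than $\tfrac{1}{2}$, apply Theorem~\ref{highint}, and then extract a single translate via Ramsey's theorem on pairs. Concretely, I would fix $\epsilon < \tfrac{1}{2}$ in Fact~\ref{fatintegers} to obtain $n \in \mathbb{N}$ with $\BD(A_{[n]}) > \tfrac{1}{2}$. Theorem~\ref{highint} then produces infinite $B', C' \subseteq \mathbb{N}$ with $C' \subseteq A_{[n]}$ and $B' + C' \subseteq A_{[n]}$. Enumerate $B' = \{b'_1 < b'_2 < \cdots\}$ and $C' = \{c'_1 < c'_2 < \cdots\}$, set $b_i := nb'_i$, pick $c_i \in [nc'_i, nc'_i + n - 1] \cap A$ (nonempty since $c'_i \in A_{[n]}$), and, for each ordered pair $(i, j)$ with $i \neq j$, pick $a_{ij} \in [n(b'_i + c'_j), n(b'_i + c'_j) + n - 1] \cap A$ (nonempty since $b'_i + c'_j \in A_{[n]}$).

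A direct calculation shows the ``defect'' $k_{ij} := a_{ij} - b_i - c_j$ lies in $\{-(n-1), \ldots, n-1\}$, since both $c_i$ and $a_{ij}$ sit within blocks of length $n$ anchored at $nc'_i$ and $n(b'_i + c'_j)$ respectively. I would then color each unordered pair $\{i, j\}$ with $i < j$ by the ordered pair $(k_{ij}, k_{ji})$, yielding a coloring into the finite set $\{-(n-1),\ldots,n-1\}^2$. Ramsey's theorem produces an infinite $I \subseteq \mathbb{N}$ and constants $\alpha, \beta$ on which this coloring is constantly $(\alpha, \beta)$. Partitioning $I$, in its increasing enumeration, into the odd-positioned subset $I_1$ and the even-positioned subset $I_2$, I would set $B := \{b_i + \alpha : i \in I_1\}$ and $C := \{c_j : j \in I_2\}$, discarding finitely many small elements of $B$ if $\alpha < 0$ so that $B \subseteq \mathbb{N}$. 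Any $(b_i + \alpha, c_j) \in B \times C$ automatically satisfies $i \neq j$; if $i < j$ then $(b_i + \alpha) + c_j = a_{ij} \in A$, and if $j < i$ then $(b_i + \alpha) + c_j = a_{ji} + (\alpha - \beta) \in A + k$ where $k := \alpha - \beta$. Hence $B + C \subseteq A \cup (A + k)$, with both $B$ and $C$ infinite.

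The main obstacle I anticipate is the bookkeeping ensuring that every pair in $B \times C$ lands on one of the two controlled Ramsey colors. The diagonal $i = j$ must be excluded (the coloring is only defined on pairs of distinct indices), which is precisely what the parity partition of $I$ enforces. A companion subtlety is that by translating $B$ (rather than $C$) by $\alpha$, one realizes the $i<j$ color class as inclusion in $A$ itself, reducing the conclusion to a single genuine translate $A + k$ rather than two unrelated translates of $A$. This is exactly the phenomenon foreshadowed in the introduction, where whether $b_i + c_j$ lies in $A$ or $A+k$ depends on the order of $i$ and $j$.
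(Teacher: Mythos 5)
Your proposal is correct and follows essentially the same route as the paper: fatten $A$ via Fact~\ref{fatintegers}, apply the high-density theorem to $A_{[n]}$, color pairs by the block offsets, extract a monochromatic set by Ramsey, and use the odd/even index split to sidestep the diagonal. One small typo: in the case $j<i$ the element $(b_i+\alpha)+c_j$ sits in the block of $b'_i+c'_j$, so it equals $a_{ij}+(\alpha-\beta)$, not $a_{ji}+(\alpha-\beta)$; the conclusion is unchanged.
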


\begin{proof}
By the previous lemma, there exists $n\in \n$ such that $\BD(A_{[n]})>\frac{1}{2}$.  Applying Theorem \ref{highdensity} to $A_{[n]}$, we obtain sets $B_{[n]}=(b_{i}),C_{[n]}=(c_{j})$ such that
$B_{[n]}+C_{[n]}\subset A_{[n]}$. \ In other words, every $[nb_{i}+nc_{j},nb_{i}%
+nc_{j}+n-1]$ intersects $A$. \ Using $n^{2}$ colors we may code every pair of
natural numbers $\{i,j\}$ with $i<j$ based on which $\nu\in\lbrack0,n-1]$ is
such that $nb_{i}+nc_{j}+\nu$ is the first element of $A$ in $[nb_{i}%
+nc_{j},nb_{i}+nc_{j}+n-1]$, and which $\xi\in\lbrack0,n-1]$ is such that
$nc_{i}+nb_{j}+\xi$ is the first element of $A$ in $[nc_{i}+nb_{j}%
,nc_{i}+nb_{j}+n-1]$. \ By Ramsey's theorem, there exists an infinite $J\subseteq \n$ monochromatic for this coloring.  

We now replace $B_{[n]}$ and $C_{[n]}$ by infinite subsequences whose indices come from $J$.  In particular, there is a fixed pair $\nu$ and $\xi$ such
that, for any $i<j$, $nb_{i}+nc_{j}+\nu\in A$ while $nc_{i}+nb_{j}+\xi\in A.$
\ If we now let $k=\nu-\xi$, $B=\{nb_{i}+\nu:i$ is odd$\}$, and $C=\{nc_{j}:j$ is even$\}$, we see that $B+C\subset A\cup(A+k)$, with the translate of $A$ for a given element of $B+C$ determined
by whether $i<j$ or $i>j$. \ It is important to note that by taking only the odd indices from one set
and the even indices from the other set we avoid the case in which the indices
are the same, something that was not determined by the use of Ramsey's Theorem.
\end{proof}

\subsection{The case of an arbitrary amenable group}

In this subsection, we once again assume that $G$ is a countable amenable group.

In order to prove the analog of Fact \ref{fatintegers} in the case of an arbitrary amenable group, we
will need the following fact, which is a particular case of Theorem 4.5 in 
\cite{Pogorzelski-Schwarzenberger}.  (There one assumes that the amenable group is unimodular, which is immediate in our case since our groups are discrete.)

\begin{fact}
\label{Lemma: Pogo-Schwa}Suppose that $\varepsilon \in \left( 0,\frac{1}{10}%
\right) $. Define $N(\varepsilon )=\left\lceil \frac{\mathrm{log}\left(
\varepsilon \right) }{\mathrm{log}\left( 1-\varepsilon \right) }\right\rceil 
$. For every finite subset $H$ of $G$ and every $\delta \in \left(
0,\varepsilon \right) $ there are $\left( H,\delta \right) $-invariant
finite sets%
\begin{equation*}
\left\{ 1_{G}\right\} \subset T_{1}\subset T_{2}\subset \ldots \subset
T_{N(\varepsilon )}\text{,}
\end{equation*}%
a finite subset $K$ of $G$ containing $H$, and a positive real number $\eta
<\delta $ such that, for every finite subset $F$ of $G$ which is $\left(
K,\eta \right) $-invariant, there are finite sets $C_{i}\subset G$ and $%
T_{i}^{\left( c\right) }\subset T_{i}$ for $i=1,2,\ldots ,N\left(
\varepsilon \right) $ such that:

\begin{itemize}
\item $\left\{ T_{i}^{\left( c\right) }c\left\vert\  i\leq N(\varepsilon )%
\text{, }c\in C_{i}\right. \right\}$ is a family of pairwise
disjoint sets;

\item $\bigcup_{i=1}^{N(\varepsilon )}T_{i}^{\left( c\right) }C_{i}\subset F$%
;

\item $\left\vert \bigcup_{i=1}^{N(\varepsilon )}T_{i}^{\left( c\right)
}C_{i}\right\vert>\left( 1-2\varepsilon \right)\cdot |F| $.
\end{itemize}
\end{fact}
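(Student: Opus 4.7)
The plan is to prove this quasi-tiling statement via a variant of the Ornstein--Weiss covering argument. The core idea is a greedy packing algorithm running from the largest scale downward, with the parameter $N(\varepsilon)$ chosen so that the geometric decay of the uncovered region across $N(\varepsilon)$ stages drives it below the $\varepsilon$ threshold.

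First, I would construct the nested family $\{1_G\} \subset T_1 \subset T_2 \subset \cdots \subset T_{N(\varepsilon)}$ by induction. Pick $T_1 \ni 1_G$ to be $(H,\delta)$-invariant, and at each step choose $T_{i+1} \supset T_i$ to be $(H \cup T_i^{-1}T_i,\delta_i)$-invariant for a $\delta_i$ dramatically smaller than $\delta$; this way $T_{i+1}$ is vastly more invariant than $T_i$, which will be needed to control boundary effects when $T_i$- and $T_{i+1}$-translates coexist. Then set $K \supseteq H \cup \bigcup_i T_i^{-1} T_i$, and choose $\eta > 0$ small enough that any $(K,\eta)$-invariant $F$ has the property that for every $i$, only a tiny fraction of $F$ lies near its boundary relative to $T_i$.

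Next, fix such an $F$ and run the greedy construction in decreasing order $i = N(\varepsilon),N(\varepsilon)-1,\ldots,1$. Let $U_{N(\varepsilon)+1} := F$, and at stage $i$ with current uncovered region $U_{i+1}$, greedily select a maximal family of centers $C_i \subset G$ together with truncated shapes $T_i^{(c)} \subset T_i$ such that $T_i^{(c)} c \subset U_{i+1}$ and the sets $\{T_i^{(c)} c : c \in C_i\}$ are pairwise disjoint; then set $U_i := U_{i+1} \setminus \bigcup_{c \in C_i} T_i^{(c)} c$. The crucial estimate is that so long as $|U_{i+1}|/|F| \geq \varepsilon$, the greedy step must cover at least an $\varepsilon$-fraction of $U_{i+1}$, for otherwise a counting argument would produce a new center whose translate could still be added, contradicting maximality. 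Iterating this yields $|U_1|/|F| \leq (1-\varepsilon)^{N(\varepsilon)} \leq \varepsilon$, and the extra $\varepsilon$ in the conclusion $(1-2\varepsilon)|F|$ absorbs the boundary losses introduced by the truncation from $T_i$ to $T_i^{(c)}$.

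The main obstacle I expect is making the per-stage $\varepsilon$-coverage claim rigorous. The key is an averaging estimate: for a highly invariant $T_i$, if one picks $c$ uniformly over an appropriate piece of $F$, the expected value of $|T_i c \cap U_{i+1}|$ equals $|T_i| \cdot |U_{i+1}|/|F|$ up to a negligible boundary error, so if the greedy packing at stage $i$ has so far covered less than an $\varepsilon$-fraction of $U_{i+1}$, some $c$ must still have enough of $T_i c$ available in the uncovered part to violate maximality. Tracking the cumulative complexity of the ``already covered'' regions produced at earlier (larger) stages, and making sure the invariance hypothesis on $F$ still leaves room for progress at stage $i$, is the delicate bookkeeping step; it is precisely why each $T_{i+1}$ has to be chosen vastly more invariant than its predecessors and why $F$ must be $(K,\eta)$-invariant for a $K$ and $\eta$ depending on the entire tower.
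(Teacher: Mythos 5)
The paper does not actually prove this Fact: it is quoted as a special case of Theorem 4.5 of Pogorzelski--Schwarzenberger, so there is no in-paper argument against which to compare your sketch. Your chosen route --- the Ornstein--Weiss greedy quasi-tiling at $N(\varepsilon)$ scales, with a nested tower $T_1\subset\cdots\subset T_{N(\varepsilon)}$ of increasing invariance and the $(1-\varepsilon)^{N(\varepsilon)}\le\varepsilon$ accounting --- is indeed the mechanism behind that theorem, so the high-level outline is sound.

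There is, however, a genuine gap in the greedy step. You take a ``maximal'' family of pairwise disjoint truncated tiles $T_i^{(c)}c\subset U_{i+1}$ with $T_i^{(c)}\subset T_i$ left entirely arbitrary. But nothing bounds $|T_i^{(c)}|$ from below, and in particular $\{1_G\}\subset T_i$ is always an admissible truncation; so a maximal family at stage $i$ trivially covers all of $U_{i+1}$ (take singleton tiles), your counting argument is never invoked, and in fact the Fact as literally stated is trivially true: take $C_1=F$, $T_1^{(c)}=\{1_G\}$ for every $c\in C_1$, and $C_i=\emptyset$ for $i>1$. The real content --- needed both to make the greedy/averaging estimate nontrivial and to carry out the application in Lemma \ref{fat}, which uses that the translated tiles $T_i^{(c)}c$ are $(H,\delta)$-invariant --- is a lower bound of the form $|T_i^{(c)}|\ge(1-\varepsilon)|T_i|$, which is present in Pogorzelski--Schwarzenberger's statement but has been dropped from the Fact as reproduced here. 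To repair the sketch, the greedy selection at stage $i$ should add a center $c$ only when $|T_i c\cap U_{i+1}|\ge(1-\varepsilon)|T_i|$ and define $T_i^{(c)}:=T_i\cap U_{i+1}c^{-1}$; with that constraint maximality has content, and the averaging estimate you outline (that some admissible $c$ exists whenever $|U_{i+1}|\ge\varepsilon|F|$ and $F$ is sufficiently invariant with respect to $T_i$) does the work you want it to.
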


\begin{lemma}
\label{fat}
%For every $\rho >0$, there exists a finite subset $Q$ of $%
%G$ such that, if $\left( F_{n}\right) _{n\in 
%TCIMACRO{\U{2115} }%
%BeginExpansion
%\mathbb{N}
%EndExpansion
%}$ is a F\o lner sequence of $G$ and%
%\begin{equation*}
%\beta =\limsup_{n}\frac{\left\vert A\cap F_{n}\right\vert }{\left\vert
%F_{n}\right\vert }\text{,}
%\end{equation*}%
%then 
%\begin{equation*}
%\limsup_{n}\frac{\left\vert QA\cap F_{n}\right\vert }{\left\vert
%F_{n}\right\vert }\geq \frac{\beta }{\alpha }-\rho \text{.}
%\end{equation*}%
For any $A\subseteq G$ with $\BD(A)>0$ and for every $\rho >0$, there is a finite subset $Q$ of $G$ such
that $\BD\left( QA\right) >1-\rho $.
\end{lemma}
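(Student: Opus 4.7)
The plan is to apply the pointwise ergodic theorem (Fact \ref{ergodic}) inside a hyperfinite F\o lner window, in the spirit of Lemma \ref{embed}, and then use Egorov's theorem to upgrade the resulting almost-everywhere convergence into a single finite set $Q$ that works uniformly on a Loeb set of measure $>1-\rho$.

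Set $\alpha:=\BD(A)>0$ and fix a tempered F\o lner sequence $(F_{n})$ with $\lim_{n}|A\cap F_{n}|/|F_{n}|=\alpha$, as furnished by the Lindenstrauss fact of Section \ref{prelim}. Fix $\nu\in{}^{*}\mathbb{N}\setminus\mathbb{N}$, let $\mu$ denote the Loeb measure on $F_{\nu}$, and, exactly as in the opening of the proof of Lemma \ref{embed}, produce a full $\mu$-measure, $G$-invariant Loeb measurable $E\subseteq F_{\nu}$ on which $(g,\xi)\mapsto g\xi$ defines a measure-preserving $G$-action. Applying Fact \ref{ergodic} to $f:=\chi_{{}^{*}\!A\cap E}$, the averages
\[
f_{n}(\xi):=\frac{1}{|F_{n}|}\bigl|F_{n}\cap({}^{*}\!A\cap E)\xi^{-1}\bigr|
\]
converge $\mu$-a.e.\ and in $L^{1}(\mu)$ to a $G$-invariant $\bar f\in L^{1}(\mu)$ with $\int\bar f\,d\mu=\alpha$. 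The heart of the argument is then to show $\bar f\equiv\alpha$ a.e. The bound $\bar f\le\alpha$ a.e.\ follows by the transfer-and-F\o lner-shift argument of Lemma \ref{embed}: a positive-measure set on which $\bar f\ge\alpha+\tfrac1k$ would, via a generic such $\xi$ and the inequality $f_{n}(\xi)\le|F_{n}\xi\cap{}^{*}\!A|/|F_{n}|$, produce by transfer standard shifts $x_{n}\in G$ with $|F_{n}x_{n}\cap A|/|F_{n}|\ge\alpha+\tfrac{1}{2k}$, so that the (still F\o lner) sequence $(F_{n}x_{n})$ witnesses $\BD(A)\ge\alpha+\tfrac{1}{2k}$, contradicting $\BD(A)=\alpha$; combined with $\int\bar f\,d\mu=\alpha$, this forces $\bar f\equiv\alpha$.

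Now Egorov furnishes a Loeb measurable $E'\subseteq E$ with $\mu(E')>1-\rho$ on which $f_{n}\to\alpha$ uniformly; choose $n_{0}$ with $f_{n_{0}}(\xi)>\alpha/2>0$ for every $\xi\in E'$. For each such $\xi$ the internal set $F_{n_{0}}\cap({}^{*}\!A\cap E)\xi^{-1}$ is nonempty, so some $g\in F_{n_{0}}$ sends $\xi$ into ${}^{*}\!A$, i.e., $\xi\in g^{-1}\cdot{}^{*}\!A\subseteq F_{n_{0}}^{-1}\cdot{}^{*}\!A={}^{*}(F_{n_{0}}^{-1}A)$. Hence $\mu({}^{*}(F_{n_{0}}^{-1}A)\cap F_{\nu})\ge\mu(E')>1-\rho$, and since $F_{\nu}$ is $(H,\varepsilon)$-invariant for every standard pair $(H,\varepsilon)$, transfer yields, for each such pair, an $(H,\varepsilon)$-invariant finite $F\subseteq G$ with $|F_{n_{0}}^{-1}A\cap F|/|F|>1-\rho$. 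Therefore $Q:=F_{n_{0}}^{-1}$ satisfies $\BD(QA)>1-\rho$, as required.

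The main obstacle is the upper bound $\bar f\le\alpha$ almost everywhere, which is exactly the transfer-and-F\o lner-shift maneuver of Lemma \ref{embed} (one must check that any positive-measure failure of this bound produces a genuine F\o lner witness for a Banach density exceeding $\alpha$). Once this is in hand, Egorov and a routine transfer do the rest. Fact \ref{Lemma: Pogo-Schwa} is not used explicitly in this ergodic-theoretic route; the authors presumably prefer a more combinatorial alternative in which it appears, for instance an inductive construction that at the $j$-th step uses Pogo-Schwa quasi-tilings to locate a translate $g_{j}A$ adding density $\approx\alpha(1-\BD(Q_{j-1}A))$ to $Q_{j-1}A$, driving $1-\BD(Q_{j}A)$ down geometrically in $j$.
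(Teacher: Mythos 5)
Your proof is correct, and it takes a genuinely different route from the paper. The paper's argument for Lemma \ref{fat} is purely combinatorial and makes no use of nonstandard analysis or ergodic theory: it invokes the Pogorzelski--Schwarzenberger quasi-tiling result (Fact \ref{Lemma: Pogo-Schwa}) to quasi-tile an $(K,\eta)$-invariant F\o lner set $F_n$ by translates of nested shapes $T_1\subset\cdots\subset T_{N(\varepsilon)}$, observes that the density hypothesis forces at least a $\frac{\alpha-3\varepsilon}{\alpha+\varepsilon}$-fraction of the tiles to meet $A$, and therefore takes $Q=\bigcup_i T_iT_i^{-1}$ so that $QA$ covers every tile that meets $A$. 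Your argument instead reuses the exact ergodic-theoretic machinery that the paper already deploys in Lemma \ref{embed}: you run the Lindenstrauss pointwise ergodic theorem on the Loeb space of a hyperfinite F\o lner window $F_\nu$, use the transfer-and-shift argument to pin $\bar f\equiv\alpha$, and then apply Egorov to uniformize the convergence over a Loeb set of measure exceeding $1-\rho$. Once $f_{n_0}>0$ holds uniformly there, each point of that set is covered by $F_{n_0}^{-1}\cdot{}^*\!A$, and a final transfer over $(H,\varepsilon)$-invariant witnesses yields $\BD(F_{n_0}^{-1}A)>1-\rho$. (One micro-point worth spelling out: since the Loeb measure is \emph{strictly} greater than $1-\rho$, you may pick a standard $\gamma>1-\rho$ with $|{}^*(F_{n_0}^{-1}A)\cap F_\nu|/|F_\nu|\ge\gamma$ and transfer that, ensuring the final inequality $\BD(QA)\ge\gamma>1-\rho$ is strict.) What each route buys: your version eliminates the dependence on Fact \ref{Lemma: Pogo-Schwa} entirely and is more internally unified with Section \ref{high}, at the cost of importing the nonstandard/ergodic framework into a lemma that the paper keeps elementary; the paper's version is self-contained combinatorics, produces a $Q$ of explicit shape $\bigcup_i T_iT_i^{-1}$, and makes the fattening lemma available independently of the Loeb-measure apparatus.
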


\begin{proof}

Set $\alpha:=\BD(A)>0$.  Pick $\varepsilon >0$ such that%
\begin{equation*}
\frac{\alpha -3\varepsilon }{\alpha +\varepsilon }>1
-\rho \text{.}
\end{equation*}%
Since $\alpha+\epsilon>\BD(A)$, there is finite $H\subseteq G$ and $\delta \in \left( 0,\varepsilon
\right) $ such that, for every $\left( H,\delta \right) $-invariant set $F
$, we have%
\begin{equation*}
\frac{\left\vert F\cap A\right\vert }{\left\vert F\right\vert }<\alpha
+\varepsilon \text{.}
\end{equation*}%
Fix $K\subseteq G$ finite, $\eta >0$, and%
\begin{equation*}
\left\{ 1_{G}\right\} \subset T_{1}\subset T_{2}\subset \ldots \subset
T_{N(\varepsilon )}
\end{equation*}%
obtained from $\varepsilon $, $\delta $, and $H$ as in Fact \ref{Lemma:
Pogo-Schwa}. Define%
\begin{equation*}
Q=\bigcup_{i=1}^{N(\varepsilon )}T_{i}T_{i}^{-1}
\end{equation*}%
and%
\begin{equation*}
B=QA\text{.}
\end{equation*}%
We claim that $\BD(B)>1-\rho$.  Towards this end, fix a F\o lner sequence $\left( F_{n}\right) _{n\in 
%TCIMACRO{\U{2115} }%
%BeginExpansion
\mathbb{N}
%EndExpansion
}$ of $G$ such that%
\begin{equation*}
\limsup_{n}\frac{\left\vert A\cap F_{n}\right\vert }{\left\vert
F_{n}\right\vert }=\alpha \text{.}
\end{equation*}%
We claim that%
\begin{equation*}
\limsup_{n}\frac{\left\vert B\cap F_{n}\right\vert }{\left\vert
F_{n}\right\vert }> 1-\rho\text{%
.}
\end{equation*}%
Fix $n_{0}\in 
%TCIMACRO{\U{2115} }%
%BeginExpansion
\mathbb{N}
%EndExpansion
$ and pick $n\geq n_{0}$ such that $F_{n}$ is $\left( K,\eta \right) $%
-invariant and%
\begin{equation*}
\frac{\left\vert F_{n}\cap A\right\vert }{\left\vert F_{n}\right\vert }%
>\alpha -\varepsilon \text{.} \quad (\dagger)
\end{equation*}%
Fix sets $C_{i}$ and $T_{i}^{\left( c\right) }\subset T_{i}$ for $i\leq
1,2,\ldots ,N(\varepsilon )$ obtained from $F_{n}$ as in Fact \ref{Lemma:
Pogo-Schwa}. Define%
\begin{equation*}
\mathcal{T}=\left\{ T_{i}^{\left( c\right) }c\left\vert \ i\leq N\left(
\varepsilon \right) ,c\in C_{i}\right. \right\} 
\end{equation*}%
and observe that $\mathcal{T}$ is a finite family of pairwise disjoint $%
\left( H,\delta \right) $-invariant finite sets such that%
\begin{equation*}
\frac{\left\vert \bigcup \mathcal{T}\right\vert }{\left\vert
F_{n}\right\vert }>1-2\varepsilon \text{.} \quad (\dagger \dagger)
\end{equation*}%
Define 
\begin{equation*}
\mathcal{T}_{0}=\left\{ T\in \mathcal{T}\left\vert \,T\cap A\neq \varnothing
\right. \right\} .
\end{equation*}%
We have%
\begin{eqnarray*}
\left( \alpha -\varepsilon \right) \left\vert F_{n}\right\vert  &<&\left\vert
A\cap F_{n}\right\vert  \\
&\leq &\left\vert A\cap \bigcup \mathcal{T}\right\vert +2\varepsilon
\left\vert F_{n}\right\vert  \\
&=&\left\vert A\cap \bigcup \mathcal{T}_{0}\right\vert +2\varepsilon
\left\vert F_{n}\right\vert  \\
&=&\sum_{T\in \mathcal{T}_{0}}\left\vert A\cap T\right\vert +2\varepsilon
\left\vert F_{n}\right\vert  \\
&\leq &\sum_{T\in \mathcal{T}_{0}}\left\vert T\right\vert \left( \alpha
+\varepsilon \right) +2\varepsilon \left\vert F_{n}\right\vert  \\
&=&\left\vert \bigcup \mathcal{T}_{0}\right\vert \left( \alpha +\varepsilon
\right) +2\varepsilon \left\vert F_{n}\right\vert .
\end{eqnarray*}%
In the above string of equalities and inequalities, the first line follows from $(\dagger)$, the second line follows from $(\dagger\dagger)$, the third line follows from the definition of $\mathcal{T}_0$, the fourth line follows from the fact that the members of $\mathcal{T}_0$ are pairwise disjoint, and the fifth line follows from the fact that the elements of $\mathcal{T}_0$ are $(H,\delta)$-invariant and the choice of $H$ and $\delta$.

It follows that %
\begin{equation*}
\frac{\left\vert \bigcup \mathcal{T}_{0}\right\vert }{\left\vert
F_{n}\right\vert }\geq \frac{\alpha -3\varepsilon }{\alpha +\varepsilon }%
\text{.}
\end{equation*}%
Observe that $B\supset \bigcup \mathcal{T}_{0}$ and therefore%
\begin{equation*}
\frac{\left\vert B\cap F_{n}\right\vert }{\left\vert F_{n}\right\vert }\geq 
\frac{\left\vert \bigcup \mathcal{T}_{0}\right\vert }{\left\vert
F_{n}\right\vert }\geq \frac{\alpha -3\varepsilon }{\alpha +\varepsilon }>%
1-\rho \text{.}
\end{equation*}
\end{proof}

Theorem \ref{smalldensity} now follows from Lemma \ref{fat} in the same way that Theorem \ref{twotranslate} followed from Lemma \ref{fatintegers}.

We leave it to the reader to verify that Theorem \ref{twotranslate} also follows from the special case of Theorem \ref{smalldensity} for $G=\mathbb \z$.

\section{The pseudorandom case}

\label{pseudo}

In this section, we prove that the $B+C$ conjecture holds for $A$ that are 
\emph{pseudorandom} in a sense to be described below. We start by recalling
some preliminary facts and definitions.

Suppose that $H$ is a Hilbert space and $U:H\to H$ is a unitary operator. We
say that $x\in H$ is \emph{weakly mixing (for $U$)} if 
\begin{equation*}
\lim_{n\to \infty} \frac{1}{n} \sum_{i=1}^n |\langle U^ix,x\rangle|=0.
\end{equation*}
We will need the following result; see \cite[Theorem 3.4]{Krengel} for a proof.

\begin{fact}
\label{mixing} $x\in H$ is weakly mixing if and only if $\lim_{n\to \infty} 
\frac{1}{n}\sum_{i=1}^n |\langle U^ix,y\rangle|=0$ for every $y\in H$.
\end{fact}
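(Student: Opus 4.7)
The $(\Leftarrow)$ direction is immediate upon taking $y=x$. For the forward direction, assume $x$ is weakly mixing. My plan is to introduce the set
$$V := \left\{ y \in H : \lim_{n\to \infty} \frac{1}{n}\sum_{i=1}^n |\langle U^i x, y\rangle| = 0\right\}$$
and to show that $V = H$, using as the only input the hypothesis $x \in V$.

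First I would verify that $V$ is a closed linear subspace of $H$. Linearity follows from the triangle inequality applied termwise inside the Cesàro sum. For closedness, if $y_k \to y$ with each $y_k \in V$, the bound $|\langle U^i x, y\rangle| \leq |\langle U^i x, y_k\rangle| + \|x\|\cdot\|y - y_k\|$ (using $\|U^i x\| = \|x\|$) gives
$$\limsup_{n\to \infty} \frac{1}{n}\sum_{i=1}^n |\langle U^i x, y\rangle| \leq \|x\|\cdot\|y-y_k\|$$
for every $k$, and letting $k\to \infty$ puts $y$ in $V$.

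Next I would check that $V$ is invariant under both $U$ and $U^*$. Since $U$ is unitary, $\langle U^i x, U^{\pm 1}y\rangle = \langle U^{i\mp 1}x, y\rangle$, so the shifted Cesàro average defining membership of $U^{\pm 1}y$ in $V$ differs from the original one defining membership of $y$ by at most $2\|x\|\|y\|/n$, hence has the same limit. By hypothesis $x \in V$, and this invariance then yields $U^i x \in V$ for every $i \in \Z$.

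Finally, suppose $z \in V^\perp$. Because each $U^i x \in V$, we have $\langle U^i x, z\rangle = 0$ for all $i \in \Z$, so the Cesàro sum defining $V$ vanishes identically at $z$, placing $z \in V$. Thus $z \in V \cap V^\perp = \{0\}$, and $V = H$, as required. The proof is essentially bookkeeping: the only nontrivial checks are the closedness and $U$-invariance of $V$, and neither presents a real obstacle. Because the statement is a purely Hilbert-space identity for a single unitary, one can avoid both the spectral theorem and van der Corput's lemma (which are the usual heavier tools for weak-mixing results in ergodic theory) and rely on the elementary subspace argument above.
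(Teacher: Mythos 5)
Your proof is correct, and it is clean. The paper itself does not prove Fact \ref{mixing} but simply cites Krengel's book; your argument supplies a self-contained, elementary proof that avoids spectral theory. Each step checks out: $V$ is closed and linear (the triangle inequality and the bound $|\langle U^ix,y-y_k\rangle|\le\|x\|\,\|y-y_k\|$ are exactly right); $V$ is $U$-invariant since shifting the index in the Ces\`aro average changes the value by $O(1/n)$; the weak-mixing hypothesis is used precisely once, to get $x\in V$ and hence $U^ix\in V$ for $i\ge 1$; and the punchline --- if $z\in V^\perp$ then $\langle U^ix,z\rangle=0$ for $i\ge 1$, so $z\in V$ vacuously and hence $z=0$ --- is valid, giving $V=\overline{V}=(V^\perp)^\perp=H$. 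Two small observations: only $U$-invariance (not $U^*$-invariance) is actually needed, since the defining sum ranges over $i\ge 1$; and it is worth noting explicitly that since the Ces\`aro terms are nonnegative, $V$ is equivalently the set where the $\limsup$ vanishes, so membership is well-defined even before one knows the limit exists. This subspace argument is the standard modern way to prove the statement; the added value of your write-up over the paper's bare citation is that the reader sees exactly where and how the hypothesis $x\in V$ enters, and that neither the spectral theorem nor van der Corput's inequality is needed.
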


We will also need the following easy fact.

\begin{fact}
\label{cesarodensity} Suppose that $(r_n)$ is a sequence of nonnegative real
numbers. Then $\lim_{n\to \infty} \frac{1}{n}\sum_{i=1}^n r_n=0$ if and only
if, for every $\epsilon>0$, we have 
\begin{equation*}
\underline{d}(\{n\in \mathbb{N} \ : \ r_n\leq \epsilon\})=1.
\end{equation*}
\end{fact}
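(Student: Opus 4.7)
The plan is to prove both directions of the stated equivalence separately, reading the sum as $\frac{1}{n}\sum_{i=1}^n r_i$ (the Cesàro average, correcting an evident indexing typo). The reverse implication will also require the implicit assumption that the sequence $(r_n)$ is bounded; this is automatic in the intended application to Fact \ref{mixing}, where $r_n = |\langle U^n x, y\rangle| \leq \|x\|\|y\|$ by Cauchy--Schwarz.

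For the forward direction, I would assume the Cesàro averages tend to zero and fix $\epsilon > 0$. Setting $E_\epsilon := \{i \in \mathbb{N} : r_i > \epsilon\}$, the Markov-type estimate
$$\frac{1}{n}\sum_{i=1}^n r_i \;\geq\; \epsilon \cdot \frac{|E_\epsilon \cap [1,n]|}{n}$$
forces $|E_\epsilon \cap [1,n]|/n \to 0$, whence $\underline{d}(\mathbb{N} \setminus E_\epsilon) = 1$, as required.

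For the reverse direction, suppose $r_i \leq M$ for all $i$ and that $\underline{d}(\{i : r_i \leq \epsilon\}) = 1$ for every $\epsilon > 0$. Fix $\epsilon > 0$ and split the Cesàro average at threshold $\epsilon$: the contribution of indices $i \leq n$ with $r_i \leq \epsilon$ is at most $\epsilon$, while the contribution of the indices with $r_i > \epsilon$ is at most $M \cdot |\{i \leq n : r_i > \epsilon\}|/n$. Thus
$$\frac{1}{n}\sum_{i=1}^n r_i \;\leq\; \epsilon + M \cdot \frac{|\{i \leq n : r_i > \epsilon\}|}{n}.$$
Taking $\limsup_{n \to \infty}$ and invoking the density hypothesis yields $\limsup_n \frac{1}{n}\sum_{i=1}^n r_i \leq \epsilon$, and letting $\epsilon \to 0^+$ completes the argument.

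The only subtle point worth flagging is that the reverse direction genuinely needs a boundedness hypothesis on $(r_n)$: otherwise the sequence with $r_{k^2} = k^2$ and $r_n = 0$ elsewhere satisfies the density condition but has Cesàro average of order $\sqrt{n}$. Since the sequences to which this fact will be applied in the paper are matrix coefficients of unitaries acting on a fixed pair of vectors, boundedness holds for free, so this is a bookkeeping remark rather than a real mathematical obstacle.
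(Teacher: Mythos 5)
Your proof is correct, and in fact the paper offers no proof of its own here---Fact~\ref{cesarodensity} is asserted as ``the following easy fact'' and left to the reader---so there is no argument to compare against. Both directions are handled in the standard way: the forward direction via the Markov-type bound $\frac{1}{n}\sum_{i\leq n} r_i \geq \epsilon\,|\{i\leq n : r_i>\epsilon\}|/n$, and the reverse direction by splitting the average at threshold $\epsilon$ and taking $\limsup$. Your reading of the inner index as $r_i$ rather than $r_n$ is clearly the intended correction of a typo.

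The observation about boundedness is a genuine and worthwhile catch. As the fact is stated, the ``if'' direction is false, and your counterexample ($r_{k^2}=k^2$ and $r_n=0$ otherwise) shows this cleanly: the set $\{n : r_n>\epsilon\}$ sits inside the squares and so has density zero for every $\epsilon$, yet $\frac{1}{n}\sum_{i\leq n} r_i \sim \frac{1}{3}\sqrt{n}\to\infty$. An implicit boundedness hypothesis is needed, and you correctly note that it holds automatically in every use the paper makes of the fact: in the proof of Theorem~\ref{pseudoBC} the terms are $|\mu((A-n)_N\cap L_N)-\alpha\beta|\leq 1$, and in the application to Fact~\ref{mixing} the terms $|\langle U^i x,y\rangle|$ are bounded by $\|x\|\,\|y\|$ via Cauchy--Schwarz. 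So the gap is in the statement, not in the paper's downstream arguments, and your proof supplies exactly the missing hypothesis.
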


In what follows, we will need the notion of \emph{upper (asymptotic) density}%
. For $A\subseteq \mathbb{N}$, the upper density of $A$, denoted $\overline{d%
}\left( A\right) $, is defined to be 
\begin{equation*}
\overline{d}\left( A\right) :=\limsup_{n\rightarrow \infty }\frac{|A\cap
\lbrack 1,n]|}{n}.
\end{equation*}

For $N\in {}^{\ast}\mathbb{N}\setminus \mathbb{N}$, we set $%
A_N:={}^{\ast }\!{A}\cap [1,N]$ and write $\mu_N$ for the Loeb measure on $[1,N]$%
. We always consider $[1,N]$ to be equipped with its Loeb measure $\mu_N$.

Suppose that $A\subseteq \mathbb{N}$ is such that $\overline{d}(A)=\alpha >0$
and $N\in {}^{\ast }\mathbb{N}\setminus \mathbb{N}$ is such that $\frac{%
|A_{N}|}{N}\approx \alpha $. Notice that $\chi _{A_{N}}\in L^{2}(\mu _{N})$.
We have a measure preserving transformation $T:[1,N]\rightarrow \lbrack 1,N]$
defined by 
\begin{equation*}
T(x):=x+1\ (\mo N).
\end{equation*}%
The transformation $T$ gives rise to the unitary operator $U_{T}:L^{2}(\mu
_{N})\rightarrow L^{2}(\mu _{N})$ given by $U_{T}(f):=f\circ T$.

We are now ready to define our notion of pseudorandom.

\begin{df}
Suppose that $A\subseteq \mathbb{N}$ is such that $\overline{d}(A)=\alpha>0$%
. We say that $A$ is \emph{pseudorandom} if there is $N \in {}^{\ast}\mathbb{%
N}\setminus \mathbb{N}$ such that, in the notation preceding the definition,
we have that $\chi_{A_N}-\alpha$ is weakly mixing (for $U_T$).
\end{df}

Equivalently $A$ is pseudorandom if and only if there is $N\in {}^{\ast }%
\mathbb{N}\setminus \mathbb{N}$ as above such that 
\begin{equation*}
\lim_{n\rightarrow \infty }\frac{1}{n}\sum_{i=1}^{n}\left\vert \mu
_{N}({}A_{N}\cap (A-i)_{N})-\alpha ^{2}\right\vert =0\text{.}
\end{equation*}

It appears to be a little awkward to give a standard reformulation of the
aforementioned notion of pseudorandom. Certainly, if there is an increasing
sequence $(b_k)$ of natural numbers such that:

\begin{itemize}
\item $\lim_k b_k=\infty$,

\item $\lim_k \frac{|A\cap [1,b_k]|}{b_k}=\alpha$, and

\item $\lim_{n}\frac{1}{n}\sum_{i=1}^{n}\lim_{k}\left\vert \frac{|A\cap
(A-i)\cap \lbrack 1,b_{k}]|}{b_{k}}-\alpha ^{2}\right\vert =0$,
\end{itemize}

\noindent then $A$ is pseudorandom (just take $N=b_K$ for any $K\in {}^{\ast}%
\mathbb{N}\setminus \mathbb{N}$).

In order to prove that pseudorandom sets satisfy the $B+C$ conjecture, we
will need one last fact whose proof is nearly identical to the proof of Theorem 4.6 in \cite{DiNasso} (just replace arbitrary hyperfinite intervals by hyperfinite intervals of the form $[1,N]$).

\begin{fact}
\label{mauro} If $A\subseteq \mathbb{N}$ is such that $\overline{d}(A)>0$,
then there is $L\subseteq \mathbb{N}$ such that $\underline{d}(L)=\overline{d%
}(A)$ and 
\begin{equation*}
\overline{d}\left( \bigcap_{l\in F} (A-l)\right)>0
\end{equation*}
for every finite $F\subseteq L$.
\end{fact}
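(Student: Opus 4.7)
The plan is to mimic the nonstandard-ergodic argument in the proof of Lemma \ref{embed}, but in the setting of upper asymptotic density: I replace arbitrary F\o lner sets with hyperfinite initial segments $[1,N]$, and I apply Fact \ref{ergodic} to the $\mathbb{Z}$-action on $[1,N]$ given by the cyclic shift.

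Set $\alpha := \overline{d}(A)$. First, choose $N \in {}^{\ast}\mathbb{N}\setminus\mathbb{N}$ with $|{}^{\ast}\!A \cap [1,N]|/N \approx \alpha$ (possible by the definition of $\overline{d}$), write $\mu := \mu_N$ for the Loeb measure on $[1,N]$, and let $Tx := x+1 \pmod N$. Applying Fact \ref{ergodic} with the tempered F\o lner sequence $F_n := [0,n-1]$ to $\chi_{{}^{\ast}\!A}$, the ergodic averages
\begin{equation*}
f_n(x) := \frac{1}{n}\sum_{i=0}^{n-1}\chi_{{}^{\ast}\!A}(T^i x)
\end{equation*}
converge $\mu$-almost everywhere to a $T$-invariant $\bar{f} \in L^2(\mu)$ with $\int \bar{f}\,d\mu = \alpha$. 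The crucial observation is that $\bar{f}$ equals the orthogonal projection of $\chi_{{}^{\ast}\!A}$ onto the space of $T$-invariants, so
\begin{equation*}
\int_{{}^{\ast}\!A}\bar{f}\,d\mu = \int \chi_{{}^{\ast}\!A}\bar{f}\,d\mu = \int \bar{f}^2\,d\mu \geq \alpha^2
\end{equation*}
by Cauchy--Schwarz. Hence the average of $\bar{f}$ over ${}^{\ast}\!A$ is at least $\alpha$, forcing $E := \{y \in {}^{\ast}\!A : \bar{f}(y) \geq \alpha\}$ to satisfy $\mu(E) > 0$.

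Next, I pick $y_0 \in E$ satisfying (i) $N - y_0$ is nonstandard, so no wraparound occurs when $y_0$ is shifted by standard naturals, and (ii) $y_0$ avoids every Loeb-null set of the form ${}^{\ast}\!A \cap \bigcap_{l \in F}({}^{\ast}\!A - l)$ as $F$ ranges over finite subsets of $\mathbb{N}$. There are only countably many conditions in (ii), each excluding a null set, so (i) and (ii) remain compatible with $y_0 \in E$. Define
\begin{equation*}
L := \{l \in \mathbb{N} : y_0 + l \in {}^{\ast}\!A\}.
\end{equation*}
By (i) and the almost-everywhere convergence from Fact \ref{ergodic}, $|L \cap [0,n-1]|/n = f_n(y_0) \to \bar{f}(y_0) \geq \alpha$ as $n \to \infty$ through $\mathbb{N}$, so $\underline{d}(L) \geq \alpha$; a routine thinning of $L$ then produces a subset of lower density exactly $\alpha$, the intersection property being preserved under passing to subsets. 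Finally, for every finite $F \subseteq L$ we have $y_0 \in {}^{\ast}\!A \cap \bigcap_{l \in F}({}^{\ast}\!A - l)$, so by (ii) this intersection has positive Loeb measure; since each $l \in F$ is standard, ${}^{\ast}\!A - l = {}^{\ast}(A - l)$, and positive Loeb measure in $[1,N]$ translates to $\overline{d}(\bigcap_{l \in F}(A - l)) > 0$ via the standard underflow argument.

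The one genuinely delicate step is the Cauchy--Schwarz inequality above: it upgrades the merely averaged statement ``$\int \bar{f}\,d\mu = \alpha$'' to the pointwise statement ``$\bar{f} \geq \alpha$ on a positive-$\mu$ piece of ${}^{\ast}\!A$'', which in turn supplies the lower-density guarantee on $L$. Everything else amounts to a standard genericity argument together with the routine nonstandard transfer from Loeb measure on $[1,N]$ to upper density on $\mathbb{N}$.
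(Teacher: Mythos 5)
Your proof is correct and follows the same nonstandard-ergodic route the paper outsources to Di Nasso's Theorem 4.6: pass to a hyperfinite initial segment $[1,N]$ carrying Loeb measure, apply the pointwise ergodic theorem (Fact \ref{ergodic}) to the shift, and select a generic point $y_0$ with ergodic average at least $\alpha$ that avoids the countably many potentially null intersection sets. The Cauchy--Schwarz/projection step is exactly the right replacement for the ``$\bar{f}\le\alpha$ a.e.'' argument of Lemma \ref{embed}, which hinges on Banach-density translation invariance and so is unavailable when one only controls upper density along initial segments.
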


We are now ready to prove the main result of this section.

\begin{thm}\label{pseudoBC}
\label{defpseudo} If $A\subseteq \mathbb{N}$ is pseudorandom, then there are
infinite $B,C\subseteq \mathbb{N}$ such that $B+C\subseteq A$.
\end{thm}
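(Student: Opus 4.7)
Strategy: I would mimic the recursive construction in the proof of Theorem \ref{highint}, replacing the density threshold $\BD(A)>\tfrac{1}{2}$ (which there forced every newly added intersection to be of Loeb measure at least $2\alpha-1$) by the weak mixing of $f:=\chi_{A_N}-\alpha$ supplied by pseudorandomness; Fact \ref{mauro} supplies a set $L$ from which to draw the $b_i$'s so that the ``$b$-side'' finite intersections $\bigcap_{i}(A-b_i)$ are automatically of positive upper density. Fix $N\in{}^{*}\mathbb{N}\setminus\mathbb{N}$ witnessing pseudorandomness and set $\alpha:=\overline{d}(A)$, $f:=\chi_{A_N}-\alpha\in L^{2}(\mu_N)$. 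Combining Fact \ref{mixing} with Fact \ref{cesarodensity}, for every $h\in L^{2}(\mu_N)$ and every $\varepsilon>0$ the set $\{i\in\mathbb{N}:|\langle h,U_{T}^{i}f\rangle|<\varepsilon\}$ has lower density $1$ in $\mathbb{N}$. Apply Fact \ref{mauro} to obtain $L\subseteq\mathbb{N}$ with $\underline{d}(L)=\alpha$ and $\overline{d}(\bigcap_{l\in F}(A-l))>0$ for every finite $F\subseteq L$.

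I would then recursively construct strictly increasing sequences $(b_n)\subseteq L$ and $(c_n)\subseteq\mathbb{N}$, all entries distinct, with $b_i+c_j\in A$ for all $i,j$, while preserving the invariant
$$\mu_N\!\left({}^{*}L_N\cap\bigcap_{j=1}^{n}({}^{*}A-c_j)_N\right)>0,$$
which holds at $n=0$ because $\mu_N({}^{*}L_N)\ge\underline{d}(L)=\alpha>0$ by overspill. At stage $n{+}1$, first pick $b_{n+1}$ in the infinite set $L\cap\bigcap_{j\le n}(A-c_j)$ (infinite by the invariant) distinct from $b_1,\ldots,b_n$; this automatically ensures $b_{n+1}+c_j\in A$ for $j\le n$. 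Then pick $c_{n+1}$ in the positive-upper-density set $\bigcap_{i\le n+1}(A-b_i)$ (positive by Fact \ref{mauro}, since all $b_i\in L$) subject to a density-$1$ side condition designed to preserve the invariant: writing $h(x):=\chi_{L}(x)\prod_{j\le n}\chi_{A_N}(x+c_j)$ and expanding $\chi_{A_N}(x+c_{n+1})=\alpha+(U_{T}^{c_{n+1}}f)(x)$ inside the integral gives
$$\mu_N\!\left({}^{*}L_N\cap\bigcap_{j=1}^{n+1}({}^{*}A-c_j)_N\right)=\alpha\!\int h\,d\mu_N+\langle h,U_{T}^{c_{n+1}}f\rangle,$$
whose main term is $\alpha$ times the current (positive) invariant. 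Weak mixing confines $|\langle h,U_{T}^{c_{n+1}}f\rangle|$ below $\tfrac{\alpha}{2}\!\int h\,d\mu_N$ for a density-$1$ set of $c_{n+1}\in\mathbb{N}$, and since any density-$1$ subset of $\mathbb{N}$ meets any positive upper density subset of $\mathbb{N}$ in an infinite set, a valid $c_{n+1}$ distinct from $c_1,\ldots,c_n$ exists and preserves the invariant.

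The main subtlety is that weak mixing is only a Cesaro statement, offering no control over any individual inner product $\langle h,U_{T}^{i}f\rangle$; Fact \ref{cesarodensity} is precisely the tool that converts the average bound into the density-$1$ pointwise statement used above, and combining this with the positive-upper-density sets furnished by Fact \ref{mauro} yields infinitely many valid choices at each stage while avoiding the finitely many previous ones. Iterating the construction produces infinite $B=\{b_n\}\subseteq L$ and $C=\{c_n\}\subseteq\mathbb{N}$ with $B+C\subseteq A$, proving the theorem.
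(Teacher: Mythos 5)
Your proof is correct, and it takes a genuinely different route from the paper's. The paper applies weak mixing \emph{once}, with the fixed test function $\chi_{L_N}$, to conclude that $R_\eta:=\{n:|\mu_N((A-n)_N\cap L_N)-\alpha\beta|<\eta\}$ has lower density $1$ (where $\beta:=\mu_N(L_N)$ and $\eta:=\alpha^2/2$). It then chooses a sequence $d_n\in\bigcap_{i\le n}(A-l_i)\cap R_\eta$, so that each $\mu_N((A-d_n)_N\cap L_N)>\eta$, and invokes Bergelson's intersectivity lemma (Fact~\ref{bergelson}) to pass to a subsequence $(e_n)$ for which the \emph{finite intersections} $\bigcap_{i\le n}(A-e_i)_N\cap L_N$ have positive Loeb measure; the alternating construction of $B$ and $C$ is then exactly as in Theorem~\ref{highdensity}. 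You instead dispense with Bergelson entirely: you maintain positivity of the finite intersection as a running invariant and re-invoke Fact~\ref{mixing} together with Fact~\ref{cesarodensity} at \emph{each} stage, with a fresh test function $h=\chi_{L_N\cap\bigcap_{j\le n}(A-c_j)_N}$, to find $c_{n+1}$ in a lower-density-$1$ set for which the ``error term'' $\langle h,U_T^{c_{n+1}}f\rangle$ is dominated by the main term $\alpha\int h\,d\mu_N$. Both approaches work; the paper's cleanly factors the argument into a single ergodic-theoretic step followed by a purely measure-theoretic lemma, while yours is more self-contained and shows that the full strength of weak mixing --- valid against \emph{every} $L^2$ test function, not just $\chi_{L_N}$ --- already supplies the intersectivity that Bergelson's lemma would otherwise provide. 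Two small points you glossed over but which do hold: positivity of the Loeb-measure invariant implies infiniteness of $L\cap\bigcap_{j\le n}(A-c_j)$ by transfer (a finite standard set has internal extension equal to itself, hence Loeb measure $0$), and a set of lower density $1$ meets every set of positive upper density in a set of positive upper density (hence infinite), which is what lets you combine the two constraints on $c_{n+1}$.
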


\begin{proof}
Set $\alpha :=\overline{d}(A)$ and take $N$ as above witnessing that $A$ is
pseudorandom. For ease of notation, we set $\mu :=\mu _{N}$. By Fact \ref%
{mauro}, we may fix $L=(l_{n})$ with $\underline{d}(L)=\alpha $ and such
that 
\begin{equation*}
\overline{d}\left( \bigcap_{l\in F}(A-l)\right) >0
\end{equation*}%
for every finite $F\subseteq L$. Set $\beta :=\mu (L_{N})\geq \alpha $.
Observe that $U_{T}^{i}(\chi _{A_{N}})=\chi _{(A-i)_{N}}$. Since $\chi
_{A_{N}}-\alpha $ is weak mixing, by Fact \ref{mixing}, we have 
\begin{equation*}
\lim_{n\rightarrow \infty }\frac{1}{n}\sum_{i=1}^{n}|\mu ((A-i)_{N}\cap
L_{N})-\alpha \beta |=0.
\end{equation*}%
By Fact \ref{cesarodensity}, for every $\epsilon >0$, we have that 
\begin{equation*}
R_{\epsilon }:=\{n\in \mathbb{N}\ :\ |\mu ((A-n)_{N}\cap L_{N})-\alpha \beta
|<\epsilon \}
\end{equation*}%
has lower density equal to $1$. In particular, for any $\epsilon >0$ and any
finite $F\subseteq L$, we have that 
\begin{equation*}
\overline{d}\left( \bigcap_{l\in F}(A-l)\cap R_{\epsilon }\right) >0.
\end{equation*}%
Setting $\eta :=\frac{\alpha ^{2}}{2}$, this allows us to inductively define
a sequence $(d_{n})$ such that, for each $n\in \mathbb{N}$, we have $%
d_{n}\in \bigcap_{i\leq n}(A-l_{i})\cap R_{\eta }$. In particular, we have $%
\mu ((A-d_{n})_{N}\cap L_{N})>\eta $ for each $n\in \mathbb{N}$. We now
apply Fact \ref{bergelson} to the family $((A-d_{n})_{N}\cap L_{N})$ to get
a subsequence $(e_{n})$ of $(d_{n})$ such that 
\begin{equation*}
\mu (\bigcap_{i\leq n}(A-e_{i})_{N}\cap L_{N})>0
\end{equation*}%
for each $n\in \mathbb{N}$. Finally, as in the proof of Theorem \ref%
{highdensity}, this allows us to define subsequences $B=(b_{n})$ and $%
C=(c_{n})$ of $(l_{n})$ and $(e_{n})$, respectively, for which $B+C\subseteq
A$.
\end{proof}

We end this section with a question. First, for $H$ a Hilbert space and $%
U:H\to H$ a unitary operator, we say that $x\in H$ is \emph{almost periodic
(for $U$)} if $\{U^nx \ : \ n\in \mathbb{Z}\}$ is relatively compact (in the
norm topology). Using the notation of Definition \ref{defpseudo}, we say
that $A$ is \emph{almost periodic} if $\chi_{A_N}$ is an almost periodic
element of $L^2([0,N])$ (for $U_T$).

\begin{question}
If $A$ is almost periodic, does $A$ satisfy the conclusion of the $B+C$
conjecture?
\end{question}

This distinction between weakly mixing and almost periodic subsets of $\n$ is reminiscent of
Furstenberg's proof of Szemeredi's Theorem (see \cite{Furstenberg}), where
it is shown how to prove Szemeredi's Theorem by first establishing it for
the weakly mixing and compact cases and then showing how to derive it for
the general case by ``Furstenberg towers'' that are ``built from'' both of
these cases. It thus makes sense to ask:

\begin{question}\label{decomp}
If the previous question has an affirmative answer, is there a way to
decompose an arbitrary $A\subseteq \mathbb{N}$ of positive lower density
into a ``tower'' built from weakly mixing and almost periodic parts in a way
that allows one to prove the $B+C$ conjecture?
\end{question}

It is unclear to us whether there are many concrete examples of pseudorandom subsets of the natural numbers, but we believe the value of Theorem \ref{pseudoBC} is that it may be a first step in proving the $B+C$ conjecture via the route outlined in Question \ref{decomp}.


\begin{thebibliography}{99}
\bibitem{BBF} M. Beiglbock, V. Bergelson, and A. Fish, \textit{Sumset
phenomenon in countable amenable groups}. Advances in Math. \textbf{223}
(2010) 416-432.

\bibitem{Bergelson} V. Bergelson, \textit{Sets of recurrence of $\mathbb{Z}%
^m $-actions and properties of sets of differences in $\mathbb{Z}^m$.} J.
London Math. Soc. (2) 31 (1985), 295-304.

\bibitem{DiNasso} M. Di Nasso, \textit{Nonstandard Analysis and the sumset phenomenon in arbitrary amenable groups}. Submitted. Available at \href{http://arxiv.org/abs/1201.5865}{arXiv:1201.5865}

\bibitem{mauromartino} M. Di Nasso and M. Lupini, \textit{Nonstandard Analysis and the sumset phenomenon in arbitrary amenable groups}. Submitted. Available at \href{http://arxiv.org/abs/1211.4208}{arXiv 1211.4208}

\bibitem{Erdos-problems} P. Erd\"{o}s, \textit{Problems and results in
combinatorial number theory.} Journees Arithm\'{e}tiques de Bordeaux (Conf.,
Univ. Bordeaux, Bordeaux, 1974), pp. 295-310. Asterisque, Nos. 24-25, Soc.
Math. France, Paris, 1975.

\bibitem{Erdos-problemsIII} P. Erd\"{o}s, \textit{Problems and results on
combinatorial number theory. III.} Number theory day (Proc. Conf.,
Rockefeller Univ., New York, 1976), pp. 43-72. Lecture Notes in Math., Vol.
626, Springer, Berlin, 1977

\bibitem{Erdos-survey} P. Erd\"{o}s, \textit{A survey of problems in
combinatorial number theory.} Combinatorial mathematics, optimal designs and
their applications (Proc. Sympos. Combin. Math. and Optimal Design, Colorado
State Univ., Fort Collins, Colo., 1978). Ann. Discrete Math. 6 (1980),
89-115.

\bibitem{Erdos-Graham} P. Erd\"{o}s, R. L. Graham, \textit{Old and new
problems and results in combinatorial number theory.} Monographies de
L'Enseignement Math\'{e}matique, 28. Universit\'{e} de Gen\`{e}ve,
L'Enseignement Math\'{e}matique, Geneva, 1980. 128 pp.

\bibitem{Furstenberg} H. Furstenberg, \textit{Ergodic behavior of diagonal
measures and a theorem of Szemer\'{e}di on arithmetic progressions}, J.
Analyse Math. \textbf{31} (1977), 204-256.

\bibitem{Hindman} N. Hindman, \textit{Finite sums from sequences within
cells of a partition of {$N$}.} J. Combinatorial Theory Ser. A 17 (1974),
1-11.

\bibitem{Hindman2} N. Hindman, \textit{On density, translates, and pairwise sums of integers} J. Combinatorial Theory Ser. A 33 (1982),
147-157.

\bibitem{Jin} R. Jin, \textit{An introduction of nonstandard methods for
number theorists.} Proceedings of the CANT (2005) Conference in Honor of Mel
Nathanson, INTEGERS: The Electronic Journal of Combinatorial Number Theory,
vol. 8, no. 2, (2008), A7.

\bibitem{Krengel} U. Krengel, \textit{Ergodic theorems}, de Gruyter Studies in Mathematics, vol. 6, Walter de
Gruyter \& Co., Berlin, 1985.

\bibitem{Lind} E. Lindenstrauss, \textit{Pointwise theorems for amenable
groups}, Invent. Math. 146 (2001), no. 2, 259-295.

\bibitem{Nathanson} M. B. Nathanson, \textit{Sumsets contained in infinite
sets of integers.} J. Combin. Theory Ser. A 28 (1980), no. 2, 150-155. 10L02

\bibitem{Pogorzelski-Schwarzenberger} F. Pogorzelski, F. Schwarzenberger, 
\textit{A Banach space-valued ergodic theorem for amenable groups and
applications}.  Submitted. Available at \href{http://arxiv.org/abs/1205.3649}{arXiv 1205.3649}

\bibitem{Szemeredi} E. Szemer\'{e}di, \textit{On sets of integers containing
no $k$ elements in arithmetic progression.} Collection of articles in memory
of Juri\v{i} Vladimirovi\v{c} Linnik. Acta Arith. 27 (1975), 199-245.
(Reviewer: S. L. G. Choi) 10L10

\bibitem{Waerden} B. L. van der Waerden, \textit{Beweis einer Baudetschen
Vermutung.} Nieuw Arch. Wisk. 15, 212-216, 1927.
\end{thebibliography}
\end{document}